\newcommand{\lbl}[1]{\label{#1}}
\newtheorem{theo}{Theorem}[section]
\newtheorem{defi}{Definition}[section]
\newcommand{\be}{\begin{equation}}
\newcommand{\ee}{\end{equation}}
\newcommand\bes{\begin{eqnarray}} \newcommand\ees{\end{eqnarray}}
\newcommand{\bess}{\begin{eqnarray*}}
\newcommand{\eess}{\end{eqnarray*}}
\newcommand\ep{\varepsilon}
\newcommand\kk{\left}
\newcommand\rr{\right}
\newcommand\dd{\displaystyle}
\newcommand\vp{\varphi}
\newcommand\lm{\lambda}
\newcommand\yy{\infty}
\newcommand\R{\mathbb{R}}
\newcommand\ol{\overline}
\newcommand\ud{\underline}
\newcommand\oo{\Omega}\newcommand\pt{\partial}
\newcommand\bds{\boldsymbol}
\newcommand\qq{\eqref}
\begin{document}\thispagestyle{empty}

\begin{center}{\Large\bf A vector-host epidemic model with spatial structure and seasonality}\\[4mm]
Mingxin Wang$^\dag$,\;\;\; Qianying Zhang$^{\ddag,}\,$\footnote{Corresponding author. {\sl E-mail}: zhangqianying@tiangong.edu.cn}

$^\dag${\small School of Mathematics and Information Science, Henan Polytechnic University, Jiaozuo 454000, China}

$^\ddag${\small School of Mathematical Sciences, Tiangong University, Tianjin 300387, China}\end{center}

\begin{quote}
\noindent{\bf Abstract.} Recently, Li and Zhao \cite{LZ21} (Bull. Math. Biol., 83(5), 43, 25 pp (2021))  proposed and studied a periodic reaction-diffusion model of Zika virus with seasonality and spatial heterogeneous structure in host and vector populations. They found the basic reproduction ratio $R_0$, which is a threshold parameter. In this short paper we shall use the upper and lower solutions method to study the model of \cite{LZ21} with Neumann boundary conditions replaced by general boundary conditions. Our approach can greatly simplify the analysis by Magal et al. \cite{MWW18} and Li \& Zhao \cite{LZ21}.

\noindent{\bf Keywords:} Zika virus; Seasonality; Reaction-diffusion model; Principal eigenvalue; Global stability.

\noindent \textbf{AMS Subject Classification (2020)}: 35K57, 37N25, 35B40
\end{quote}

\pagestyle{myheadings}
\section{Introduction}{\setlength\arraycolsep{2pt}
\markboth{\rm$~$ \hfill A vector-host epidemic model\hfill $~$}{\rm$~$ \hfill M.X. Wang \& Q.Y. Zhang\hfill $~$}

Zika virus is a mosquito-borne flavivirus that is primarily transmitted to humans through the bites of the Aedes aegypti and Aedes albopictus. Zika virus was first isolated from a rhesus monkey in the Zika Forest of Uganda in 1947 (\cite{DKH}). The first human outbreak of Zika virus has occurred in Island of Yap in 2007. A large outbreak in Brazil was occurred and provided a large number of infected cases in May 2015. Since then, it has spread rapidly to many other countries.

Mathematical modeling has become an important tool used to describe the spread of Zika virus. Fitzgibbon et al. \cite{FMW17} studied the well-posedness of the vector-host epidemic model with spatial heterogeneous structure and the numerical simulations of a periodic model with a time dependent vector breeding rate, respectively. Magal et al. \cite{MWW18} provided a detailed analysis of the reaction-diffusion model proposed in \cite{FMW17} and proved that the basic reproduction ratio $R_0$ serves as a threshold value to established the evolution dynamics of the model. In order to study the impact of spatiotemporal heterogeneities and movements on the spread and persistence of diseases, it is essential to investigate the role of diffusion and seasonality in the transmission of diseases in a heterogeneous environment.

Divide the population into two subpopulations: the host and vector populations, and suppose that all populations are living in a bounded domain $\Omega\subset\R^n$ with smooth boundary $\partial\Omega$.  Let $H_i(x,t)$, $V_u(x,t)$ and $V_i(x,t)$ be the densities of infected hosts, susceptible vectors, and infected vectors at location $x$ and time $t$, respectively. Recently, Li and Zhao \cite{LZ21} proposed and studied the following periodic reaction-diffusion model of Zika virus with seasonality and spatial heterogeneous structure in host and vector populations:\vspace{-1mm}
\bes\left\{\begin{array}{ll}
\partial_t H_i-\nabla\cdot d_1(x,t)\nabla H_i=-\rho(x,t)H_i+\sigma_1(x,t)H_u(x,t)V_i,\;\;&x\in\Omega,\;t>0,\\[1mm]
\partial_t V_u-\nabla\cdot d_2(x,t)\nabla V_u=-\sigma_2(x,t)V_uH_i+\beta(x,t)(V_u+V_i)\\[1mm]
\hspace{43mm}-\mu_1(x,t) V_u-\mu_2(x,t)(V_u+V_i)V_u,\;\;&x\in\Omega,\;t>0,\\[1mm]
\partial_t V_i-\nabla\cdot d_2(x,t)\nabla V_i=\sigma_2(x,t)V_uH_i-\mu_1(x,t) V_i-\mu_2(x,t)(V_u+V_i)V_i,\;\;&x\in\Omega,\;t>0,\\[1mm]
(H_i(\cdot, 0), V_u(\cdot, 0), V_i(\cdot, 0))=(H_{i0}, V_{u0}, V_{i0})\in C(\overline\Omega;\,\mathbb{R}^3_+)
	\end{array}\right.\qquad\label{1.1}\vspace{-1mm}
\ees
with the homogeneous Neumann boundary conditions
  \bess
  \frac{\partial H_i}{\partial\nu}=\frac{\partial V_u}{\partial\nu}=\frac{\partial V_i}{\partial\nu}=0,\;\;\;x\in\pt\oo,\; t>0,
  \lbl{1.2a}\eess
where $\nu$ is the outward normal vector of $\partial\Omega$. The coefficient  functions satisfy  \vspace{-2mm}
\begin{enumerate}[leftmargin=10mm]
\item[{\bf(H)}]\, All coefficient functions are $T$-periodic in $t$, H\"{o}lder continuous, nonnegative and nontrivial; $\sigma_1(x,t)H_u(x,t)\not\equiv 0$ for $(x,t)\in\Omega\times(0,T]$; $\rho(x,t)$, $\sigma_2(x,t)$, $\mu_1(x,t)$, $d_1(x,t)$ and $d_2(x,t)$ are positive, and $\nabla_xd_i$ is H\"{o}lder continuous for $(x,t)\in\overline\Omega\times[0,T]$, $i=1,2$.\vspace{-2mm}
\end{enumerate}
Li and Zhao \cite{LZ21} introduced the basic reproduction ratio $R_0$ and shown  that the disease-free periodic solution is globally asymptotically stable if $R_0\le 1$, while the positive periodic solution is globally asymptotically stable if $R_0>1$.

In this short paper we use the upper and lower solutions method to study the dynamics of the model \qq{1.1} with general boundary conditions:
 \bes
 a_1\frac{\partial H_i}{\partial\nu}+b_1(x,t)H_i
 =a_2\frac{\partial V_u}{\partial\nu}+b_2(x,t)V_u
 =a_2\frac{\partial V_i}{\partial\nu}+b_2(x,t)V_i=0,\;\;\;x\in\pt\oo,\;t>0,
 \lbl{1.2}\ees
where either (i) $a_k=0$, $b_k=1$; or (ii) $a_k=1$, $b_k(x,t)\geq 0$ with $b_k\in C^{1+\alpha,\, (1+\alpha)/2}(\pt\oo\times[0,T])$ and $T$-periodic in $t$ for $k=1,2$. This approach can greatly simplify the analysis by Magal et al. \cite{MWW18} and Li \& Zhao \cite{LZ21}.

\section{Eigenvalue problems}\setcounter{equation}{0} {\setlength\arraycolsep{2pt}

Let
 $$\mathscr{L}_k=\partial_t-a^{ij}_k(x,t)D_{ij}+b^i_k(x,t)D_i,\;\;1\leq k\leq m$$
be strongly parabolic operator in $\oo\times(0,T]$, $a^{ij}_k,b^i_k\in C^{\alpha,\alpha/2}(\overline\Omega\times[0,T])$, $i,j=1,\cdots,n$ and
 $$ \mathscr{B}_k=a_k\partial_{\nu}+b_k(x,t), \ \ 1\leq k\leq m,$$
where either (i) $a_k=0$, $b_k=1$; or (ii) $a_k=1$, $b_k(x,t)\geq 0$ with $b_k\in C^{1+\alpha,\, (1+\alpha)/2}(\pt\oo\times[0,T])$. Define
 \[\mathscr{L}={\rm diag}(\mathscr{L}_1,\ldots,\mathscr{L}_m),\;\;\;
  \mathscr{B}={\rm diag}(\mathscr{B}_1,\ldots,\mathscr{B}_m).\]
Let $H(x,t)=(h_{ij}(x,t))_{m\times m}$ and $h_{ij}\in C^{\alpha,\alpha/2}(\overline\Omega\times[0,T])$, $u=(u_1,\cdots,u_m)^T$.

\begin{defi}\lbl{def2.1} Let $X$ be a Banach space composed of functions defined in $\overline\Omega$. We say that the operator $(\mathscr{L}, H, \mathscr{B})$ is {\it regular} if for any $\varphi\in X$, the linear problem
 \bes\begin{cases}
 \mathscr{L}u=H(x,t)u,\;\;\;&x\in\Omega,\;0<t\le T,\\
 \mathscr{B}[u]=0,\;\;&x\in\partial\Omega,\;0<t\le T,\\
 u(x,0)=\varphi(x),&x\in\Omega
 \end{cases}\lbl{2.1}\ees
has a unique solution $u(x,t)$ and the Poincar\'{e} map ${\mathcal A}$ defined by ${\mathcal A}\varphi=u(x,T)$ is compact in $X$.

Let $X$ be a Banach space, $P$ be a solid positive cone in $X$ and $P^\circ$ be the interior of $P$. We say that $(\mathscr{L}, H, \mathscr{B})$ has the {\it strong maximum principle property} if, for any $\varphi\in P\setminus\{0\}$, the unique solution $u(x,t)$ of \qq{2.1} satisfies $u(x,t)\in P^\circ$ for all $0<t\le T$.
\end{defi}

We call that $H$ is cooperative (essentially positive) if $h_{ij}(x,t)\ge 0$ for all $i\not=j$ and $(x,t)\in\Omega\times\R$, and that $H$ is fully coupled if the index set $\{1,\cdots, m\}$ cannot be split up in two disjoint nonempty  sets ${\cal I}$ and ${\cal J}$ such that $h_{ij}(x,t)\equiv 0$ in $\Omega\times\R$  for $i\in{\cal I}, j\in{\cal J}$.

We mention that if $H$ is cooperative and fully coupled, then $(\mathscr{L}, H, \mathscr{B})$ has the strong maximum principle property by the strong maximum principle for parabolic equations.

\begin{theo}\lbl{th2.1} Assume that $a^{ij}_k, b^i_k, b_k$ and $h_{ij}$ are  time-periodic with period $T$. If the operator $(\mathscr{L}, H, \mathscr{B})$ is regular and has the  strong maximum principle property, then the time-periodic parabolic eigenvalue problem
 \bes\begin{cases}
 \mathscr{L}\phi=H(x,t)\phi+\lm\phi,\;\;&x\in\Omega,\;0<t\le T,\\
 \mathscr{B}[\phi]=0,\;\;&x\in\partial\Omega,\;0<t\le T,\\
 \phi(x,0)=\phi(x,T),\;\;&x\in\Omega
 \end{cases}\lbl{2.2}\ees
has a unique principal eigenvalue $\lm(\mathscr{L}, H, \mathscr{B}; T)$ with positive eigenfunction $\phi$, i.e., $\phi_i(x,t)>0$ for $(x,t)\in\Omega\times(0,T]$, $i=1,\cdots, m$.

Especially, when $a^{ij}_k, b^i_k, b_k$ and $h_{ij}$ do not depend on $t$, then $\lm(\mathscr{L}, H, \mathscr{B}; 1)$ is the unique principal eigenvalue of \qq{2.2} with positive eigenfunction $\tilde\phi(x)=\int_0^1\phi(x,t){\rm d}t$.
\end{theo}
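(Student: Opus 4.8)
\medskip
\noindent\emph{Proof strategy.} The plan is to turn \qq{2.2} into a spectral problem for the Poincar\'e map $\mathcal A$ and then invoke the Krein--Rutman theorem. The key device is the substitution $\psi(x,t)=e^{-\lm t}\phi(x,t)$: since $\mathscr{L}(e^{\lm t}\psi)=e^{\lm t}(\lm\psi+\mathscr{L}\psi)$, a function $\phi$ is a $T$-periodic solution of $\mathscr{L}\phi=H\phi+\lm\phi$ in $Q_T$, $\mathscr{B}[\phi]=0$ on $S_T$ if and only if $\psi$ solves the homogeneous system $\mathscr{L}\psi=H\psi$ in $Q_T$, $\mathscr{B}[\psi]=0$ on $S_T$ with $\psi(\cdot,0)=\varphi:=\phi(\cdot,0)$ and, in addition, $\psi(\cdot,T)=e^{-\lm T}\varphi$; equivalently, $\mathcal A\varphi=e^{-\lm T}\varphi$. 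As $\lm\mapsto e^{-\lm T}$ is a decreasing bijection of $\R$ onto $(0,\yy)$, the values $\lm$ for which \qq{2.2} admits a positive eigenfunction correspond exactly to the positive eigenvalues $\mu$ of $\mathcal A$ possessing an eigenvector in $P\setminus\{0\}$; and if $\mathcal A\varphi=\mu\varphi$ with $\varphi\in P\setminus\{0\}$, $\mu>0$, then $\varphi=\mu^{-1}\mathcal A\varphi\in P^\circ$ by the strong maximum principle property, while the corresponding $\phi=e^{\lm t}\psi$ is strictly positive in $Q_T$ because $\psi(\cdot,t)\in P^\circ$ for $0<t\le T$, again by that property.

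The second step is to apply the Krein--Rutman theorem to $\mathcal A\colon X\to X$. Regularity makes $\mathcal A$ compact, and the strong maximum principle property makes it strongly positive, $\mathcal A(P\setminus\{0\})\subset P^\circ$; fixing $\varphi\in P^\circ$ one gets $c>0$ with $\mathcal A\varphi-c\varphi\in P$, hence $\mathcal A^n\varphi\ge c^n\varphi$ and $r(\mathcal A)\ge c>0$. The strong Krein--Rutman theorem then gives that $r(\mathcal A)$ is an algebraically simple eigenvalue of $\mathcal A$ with eigenvector $\varphi_0\in P^\circ$, and is the only eigenvalue of $\mathcal A$ with an eigenvector in $P\setminus\{0\}$. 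I would then set $\lm(\mathscr{L},H,\mathscr{B};T):=-T^{-1}\ln r(\mathcal A)$; letting $\psi$ solve the homogeneous problem with initial datum $\varphi_0$, the function $\phi:=e^{\lm(\mathscr{L},H,\mathscr{B};T)\,t}\psi$ is, by the correspondence above, a $T$-periodic eigenfunction of \qq{2.2} with $\phi_i>0$ in $Q_T$. Uniqueness of the eigenvalue follows at once: another $\lm'$ with a positive eigenfunction would yield $e^{-\lm'T}$ as a second eigenvalue of $\mathcal A$ with an eigenvector in $P\setminus\{0\}$, which is excluded.

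For the autonomous statement I would apply the above with $T=1$, obtaining $\lm:=\lm(\mathscr{L},H,\mathscr{B};1)$ and a positive $1$-periodic eigenfunction $\phi(x,t)$, and then average in time: integrating $\mathscr{L}\phi=H\phi+\lm\phi$ over $t\in[0,1]$, the term $\int_0^1\pl_t\phi\,{\rm d}t=\phi(x,1)-\phi(x,0)$ vanishes by periodicity, and, since $a^{ij}_k,b^i_k,b_k,h_{ij}$ do not depend on $t$, every remaining operation commutes with $\int_0^1(\cdot)\,{\rm d}t$. Thus $\tilde\phi(x):=\int_0^1\phi(x,t)\,{\rm d}t$ solves $-a^{ij}_kD_{ij}\tilde\phi_k+b^i_kD_i\tilde\phi_k=(H\tilde\phi)_k+\lm\tilde\phi_k$ in $\oo$ with $\mathscr{B}[\tilde\phi]=0$ on $\pl\oo$, i.e.\ \qq{2.2} with $\pl_t\tilde\phi\equiv0$, and $\tilde\phi_i(x)=\int_0^1\phi_i(x,t)\,{\rm d}t>0$ for $x\in\oo$ since $\phi_i>0$ in $Q_1$; hence $\lm(\mathscr{L},H,\mathscr{B};1)$ is the principal eigenvalue with positive eigenfunction $\tilde\phi$, uniqueness being the case $T=1$ already proved. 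One can also note that algebraic simplicity of $r(\mathcal A)$ makes the positive eigenfunction unique up to a positive scalar, which together with autonomy forces $\phi(\cdot,t+s)\equiv\phi(\cdot,t)$, so in fact $\tilde\phi=\phi(\cdot,0)$.

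I expect the only delicate points to be the positivity of the spectral radius $r(\mathcal A)$ --- handled above via the order inequality $\mathcal A\varphi\ge c\varphi$, which rests on solidity (and normality) of $P$ and strong positivity of $\mathcal A$ --- and the implicit but essential freedom to choose the pair $(X,P)$ so that $P^\circ\neq\varnothing$ while $\mathcal A$ remains compact and strongly positive; under Dirichlet conditions one cannot use $X=C(\bar\oo;\R^m)$ but must work in a space that encodes the boundary condition, such as a $C^1$-type space vanishing on $\pl\oo$. Both issues are already absorbed into the hypotheses ``regular'' and ``strong maximum principle property'' of Definition~\ref{def2.1}, so the argument proper reduces to the standard periodic-parabolic Krein--Rutman scheme.
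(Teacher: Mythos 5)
Your proof is correct and takes essentially the same approach as the paper: both reduce \eqref{2.2} to a Krein--Rutman analysis of the Poincar\'e map $\mathcal{A}$ via the exponential conjugation $\lm=-T^{-1}\ln r(\mathcal{A})$, $\phi=e^{\lm t}u$. Your write-up is considerably more detailed than the paper's terse argument (you spell out the bijection $\lm\leftrightarrow e^{-\lm T}$, the positivity of $r(\mathcal{A})$, and the time-averaging step in the autonomous case), but the route is identical.
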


\begin{proof}\hspace{-1mm}\footnote{The idea here originated from a discussion between the first author and Professor Xing Liang of the University of Science and Technology of China. The authors expresses their deep gratitude to Professor Xing Liang.}  Since the operator $(\mathscr{L}, H, \mathscr{B})$ is regular and has the strong maximum principle property,  for any given $\varphi\in P\setminus\{0\}$, the Poincar\'{e} map ${\mathcal A}$ defined by ${\mathcal A}\varphi=u(x,T)$ is compact in $X$ and strongly positive with respect to $P$. In view of the Krein-Rutman theorem (the strong version), $r({\mathcal A})$, the spectral radius of ${\mathcal A}$, is positive and is the unique principal eigenvalue of $r({\mathcal A})$ with positive eigenfunction $\varphi\in P^\circ$, i.e.,
 \[{\mathcal A}\varphi=r({\mathcal A})\varphi,\;\;\;{\rm or}\;\;u(x,T)=r({\mathcal A})u(x,0),\;\;\;u(x,0)\in P^\circ.\]
Set
 \[\lm=-\frac{\ln r({\mathcal A})}T,\;\;\;\phi(x,t)={\rm e}^{\lm t}u(x,t).\]
It is easy to verify that $(\lm, \phi)$ satisfies \qq{2.2}, i.e., $\lm$ is the unique principal eigenvalue of \qq{2.2} with positive eigenfunction $\phi(x,t)$.

It is obvious that when $a^{ij}_k, b^i_k, b_k$ and $h_{ij}$ do not depend on $t$, then $\lm(\mathscr{L}, H, \mathscr{B}; 1)$ is the unique principal eigenvalue of \qq{2.2} with positive eigenfunction $\tilde\phi(x)=\int_0^1\phi(x,t){\rm d}t$.
\end{proof}

\section{Positive time periodic solutions}\setcounter{equation}{0} {\setlength\arraycolsep{2pt}

In this section, we consider the following time periodic problem associated to \qq{1.1} and \qq{1.2}:
\bes\begin{cases}
\partial_t\bds{H}_{\!i}-\nabla\cdot d_1(x,t)\nabla \bds{H}_{\!i}=-\rho(x,t)\bds{H}_{\!i}+\sigma_1(x,t) H_u(x,t)\bds{V}_{\!i},\;\;&x\in\Omega,\;0<t\le T,\\
\partial_t\bds{V}_{\!u}-\nabla\cdot d_2(x,t)\nabla \bds{V}_{\!u}=-\sigma_2(x,t){\bds{V}_{\!u}}\bds{H}_{\!i} +\beta(x,t)({\bds{V}_{\!u}}+\bds{V}_{\!i})\\  \hspace{45mm}-\mu_1(x,t){\bds{V}_{\!u}}-\mu_2(x,t)({\bds{V}_{\!u}}
+\bds{V}_{\!i}){\bds{V}_{\!u}},\;\;&x\in\Omega,\;0<t\le T,\\
\partial_t\bds{V}_{\!i}-\nabla\cdot d_2(x,t)\nabla\bds{V}_{\!i}
=\sigma_2(x,t){\bds{V}_{\!u}}\bds{H}_{\!i}-\mu_1(x,t) \bds{V}_{\!i}-\mu_2(x,t)({\bds{V}_{\!u}}
+\bds{V}_{\!i})\bds{V}_{\!i},\;\;&x\in\Omega,\;0<t\le T,\\[2mm]
a_1\dd\frac{\partial\bds{H}_{\!i}}{\partial\nu}+b_1(x,t)\bds{H}_{\!i}
=a_2\dd\frac{\partial\bds{V}_{\!u}}{\partial\nu}+b_2(x,t)\bds{V}_{\!u}
=a_2\dd\frac{\partial\bds{V}_{\!i}}{\partial\nu}+b_2(x,t)\bds{V}_{\!i}=0,\;\;&x\in\partial\Omega,\;0<t\le T,\\[1mm]
\bds{H}_{\!i}(x,0)=\bds{H}_{\!i}(x,T), \;\; {\bds{V}_{\!u}}(x,0)={\bds{V}_{\!u}}(x,T), \;\; \bds{V}_{\!i}(x,0)=\bds{V}_{\!i}(x,T),\;\;&x\in\Omega.
\end{cases}\qquad\label{3.1}\vspace{-1mm}
 \ees
Assume that $(\bds{H}_{\!i}, \bds{V}_{\!u}, \bds{V}_{\!i})$ is a nonnegative solution of \eqref{3.1} and set $\bds{V}=\bds{V}_{\!u}+\bds{V}_{\!i}$. Then $\bds{V}$ satisfies
 \bes\left\{\begin{array}{lll}
\partial_t\bds{V}-\nabla\cdot d_2(x,t)\nabla\bds{V}=(\beta(x,t)-\mu_1(x,t))\bds{V}
-\mu_2(x,t)\bds{V}^2,\;\;\;&x\in\Omega,\;0<t\le T,\\[2mm]
a_2\dd\frac{\partial\bds{V}}{\partial\nu}+b_2(x,t)\bds{V}=0,\;\;&x\in\partial\Omega,\;0<t\le T,\\[1mm]
\bds{V}(x,0)=\bds{V}(x,T),\;\;\;\;&x\in\Omega.
 \end{array}\right.\label{3.3}\ees
Let $\zeta(\mu_1,\beta)$ be the principal eigenvalue of
 \bes\begin{cases}
\partial_t\vp-\nabla\cdot d_2(x,t)\nabla\varphi+(\mu_1(x,t)-\beta(x,t))\varphi=
 \zeta\varphi,\;\;&x\in\Omega,\;0<t\le T,\\[2mm]
a_2\dd\frac{\partial\varphi}{\partial\nu}+b_2(x,t)\varphi=0,\;\;&x\in\partial\Omega,\;0<t\le T,\\[1mm]
\varphi(x,0)=\varphi(x,T),\;\;\;\;&x\in\Omega.
	\end{cases}\lbl{3.2}\ees
Thanks to \cite[Theorem 28.1]{Hess}, \eqref{3.3} has no positive solution when $\zeta(\mu_1,\beta)\ge 0$. Therefore, ${\bds{V}}=0$, i.e., ${\bds{V}_{\!u}}=\bds{V}_{\!i}=0$, and then $\bds{H}_{\!i}=0$ by the first equation of \eqref{3.1}. If $\zeta(\mu_1,\beta)<0$, then \eqref{3.3} has a unique positive solution ${\bds{V}}$ and ${\bds{V}}$ is globally asymptotically stable in the periodic sense.

Assume that $\zeta(\mu_1,\beta)<0$ and let ${\bds{V}}$ be the unique positive solution of \eqref{3.3}. To investigate the positive solutions of \eqref{3.1} is equivalent to study the positive solutions $(\bds{H}_{\!i}, \bds{V}_{\!i})$ of\vspace{-1mm}
 \bes\left\{\begin{array}{lll}
\partial_t\bds{H}_{\!i}-\nabla\cdot d_1(x,t)\nabla \bds{H}_{\!i}=-\rho(x,t)\bds{H}_{\!i}+\sigma_1(x,t) H_u(x,t)\bds{V}_{\!i},\;\;&x\in\Omega,\;0<t\le T,\\[1mm]
\partial_t\bds{V}_{\!i}-\nabla\cdot d_2(x,t)\nabla\bds{V}_{\!i}=\sigma_2(x,t)({\bds{V}}(x,t)
-\bds{V}_{\!i})\bds{H}_{\!i}\\[1mm]
\hspace{43mm}
-\big(\mu_1(x,t)+\mu_2(x,t){\bds{V}}(x,t)\big)\bds{V}_{\!i},\;\;\;&x\in\Omega,\;0<t\le T,\\[2mm]
a_1\dd\frac{\partial\bds{H}_{\!i}}{\partial\nu}+b_1(x,t)\bds{H}_{\!i}=a_2\dd\frac{\partial\bds{V}_{\!i}}{\partial\nu}+b_2(x,t)\bds{V}_{\!i}=0,\;\;&x\in\partial\Omega,\;0<t\le T,\\[2mm]
\bds{H}_{\!i}(x,0)=\bds{H}_{\!i}(x,T),\;\; \bds{V}_{\!i}(x,0)=\bds{V}_{\!i}(x,T),\;\;\;\;&x\in\Omega
\end{array}\right.\label{3.4}\ees
satisfying $\bds{V}_{\!i}<\bds{V}$. The linearized eigenvalue problem of \eqref{3.4} at $(0,0)$ is
 \bes\left\{\begin{array}{lll}
\partial_t\phi_1-\nabla\cdot d_1(x,t)\nabla\phi_1+\rho(x,t)\phi_1
=\sigma_1(x,t)H_u(x,t)\phi_2+\lm\phi_1,\;\;&x\in\Omega,\;0<t\le T,\\[1mm]
\partial_t\phi_2-\nabla\cdot d_2(x,t)\nabla\phi_2+\big(\mu_1(x,t)+\mu_2(x,t){\bds{V}}(x,t)\big)\phi_2\\[1mm]
\hspace{57mm}
=\sigma_2(x,t){\bds{V}}(x,t)\phi_1+\lm\phi_2,\;\;\;&x\in\Omega,\;0<t\le T,\\[2mm]
a_1\dd\frac{\partial\phi_1}{\partial\nu}+b_1(x,t)\phi_1=a_2\dd\frac{\partial\phi_2}{\partial\nu}+b_2(x,t)\phi_2=0,\;\;&x\in\partial\Omega,\;0<t\le T,\\[2mm]
\phi_1(x,0)=\phi_1(x,T), \ \ \phi_2(x,0)=\phi_2(x,T),\;\;\;\;&x\in\Omega.
\end{array}\right.\label{3.5}\ees
By Theorem \ref{th2.1}, the problem \eqref{3.5} has a unique principal eigenvalue $\lm(\bds{V})$ with positive eigenfunction $\phi=(\phi_1,\phi_2)^T$.

\begin{theo}\lbl{th3.1} Assume that $\zeta(\mu_1,\beta)<0$. Then problem \eqref{3.4} has a positive solution $(\bds{H}_{\!i}, \bds{V}_{\!i})$ if and only if $\lm(\bds{V})<0$. Moreover, the positive solution $(\bds{H}_{\!i}, \bds{V}_{\!i})$ of \eqref{3.4} is unique and satisfies $\bds{V}_{\!i}<{\bds{V}}$ when it exists. Therefore, \eqref{3.1}  has a positive solution $(\bds{H}_{\!i}, \bds{V}_{\!u}, \bds{V}_{\!i})$ if and only if $\lm(\bds{V})<0$, and $(\bds{H}_{\!i}, \bds{V}_{\!u}, \bds{V}_{\!i})$ is unique and takes the form $(\bds{H}_{\!i}, \bds{V}-\bds{V}_{\!i}, \bds{V}_{\!i})$  when it exists.
\end{theo}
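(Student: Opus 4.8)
The plan is to prove the assertions for the reduced system \eqref{3.4}; the statements about \eqref{3.1} then follow from the reduction already carried out before the theorem. Indeed, if $(\bds{H}_{\!i},\bds{V}_{\!\!u},\bds{V}_{\!i})$ is a positive solution of \eqref{3.1}, then $\bds{V}=\bds{V}_{\!\!u}+\bds{V}_{\!i}\not\equiv0$ solves \eqref{3.3}, so $\bds{V}$ is the unique positive solution of \eqref{3.3} and $(\bds{H}_{\!i},\bds{V}_{\!i})$ is a positive solution of \eqref{3.4}; conversely, given a positive solution $(\bds{H}_{\!i},\bds{V}_{\!i})$ of \eqref{3.4} with $\bds{V}_{\!i}<\bds{V}$, subtracting the $\bds{V}_{\!i}$-equation of \eqref{3.4} from \eqref{3.3} shows that $(\bds{H}_{\!i},\bds{V}-\bds{V}_{\!i},\bds{V}_{\!i})$ solves \eqref{3.1}. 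Thus it suffices to prove, for \eqref{3.4}: (a) every positive solution satisfies $\bds{V}_{\!i}<\bds{V}$; (b) a positive solution exists only if $\lm(\bds{V})<0$; (c) if $\lm(\bds{V})<0$, a positive solution exists; and (d) the positive solution is unique.

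\emph{Steps (a) and (b).} Let $(\bds{H}_{\!i},\bds{V}_{\!i})$ be a positive solution of \eqref{3.4} and set $W=\bds{V}-\bds{V}_{\!i}$. Subtracting the $\bds{V}_{\!i}$-equation from \eqref{3.3} gives
\[
{\cal L}_2[W]+\big(\sigma_2\bds{H}_{\!i}+\mu_1+\mu_2\bds{V}\big)W=\beta(x,t)\bds{V}(x,t)\ \ \text{in}\ Q_T,\qquad {\cal B}_2[W]=0\ \ \text{on}\ S_T,\qquad W(\cdot,0)=W(\cdot,T).
\]
The zeroth-order coefficient is positive (recall $\mu_1>0$), so the principal eigenvalue of this periodic operator is positive; since the right-hand side is nonnegative and not identically zero ($\beta\not\equiv0$, $\bds{V}>0$), the maximum principle forces $W>0$ in $Q_T$, i.e.\ $\bds{V}_{\!i}<\bds{V}$. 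Rewriting \eqref{3.4} as ${\cal L}_1[\bds{H}_{\!i}]+\rho\bds{H}_{\!i}=\sigma_1H_u\bds{V}_{\!i}$ and ${\cal L}_2[\bds{V}_{\!i}]+(\mu_1+\mu_2\bds{V})\bds{V}_{\!i}=\sigma_2\bds{V}\bds{H}_{\!i}-\sigma_2\bds{V}_{\!i}\bds{H}_{\!i}$ and comparing with the linearized system \eqref{3.5}, we see that $(\bds{H}_{\!i},\bds{V}_{\!i})$ is a subsolution of \eqref{3.5} with $\lm=0$, strict in the second equation since $\sigma_2\bds{V}_{\!i}\bds{H}_{\!i}\not\equiv0$. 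Letting $u$ be the solution of \eqref{3.5} with $\lm=0$ and $u(\cdot,0)=(\bds{H}_{\!i}(\cdot,0),\bds{V}_{\!i}(\cdot,0))$, the comparison principle for this cooperative, fully coupled system gives $u\ge(\bds{H}_{\!i},\bds{V}_{\!i})$, and the strong maximum principle applied to the nonnegative difference (which carries the nontrivial source $\sigma_2\bds{V}_{\!i}\bds{H}_{\!i}$) gives $u(\cdot,T)\ge u(\cdot,0)$ with strict inequality somewhere. Hence the Poincar\'{e} map ${\cal A}_0$ of \eqref{3.5} with $\lm=0$, which is compact and strongly positive by the framework of Theorem \ref{th2.1}, satisfies ${\cal A}_0p\ge p$, ${\cal A}_0p\ne p$, for some $p\gg0$, so $r({\cal A}_0)>1$; by the identity $\lm(\bds{V})=-T^{-1}\ln r({\cal A}_0)$ from the proof of Theorem \ref{th2.1}, this means $\lm(\bds{V})<0$.

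\emph{Step (c).} Assume $\lm(\bds{V})<0$ and let $\phi=(\phi_1,\phi_2)$ be the positive principal eigenfunction of \eqref{3.5}. Take $(\ol H,\ol V)=(K,\bds{V})$ with $K$ a constant so large that $\rho K\ge\sigma_1H_u\bds{V}$ on $\bar Q_T$, and $(\ud H,\ud V)=\ep(\phi_1,\phi_2)$ with $\ep>0$ small. For $(\ol H,\ol V)$ the upper-solution inequality of the first equation reduces to $0\ge-\rho K+\sigma_1H_u\bds{V}$, that of the second, using ${\cal L}_2[\bds{V}]=(\beta-\mu_1-\mu_2\bds{V})\bds{V}$, reduces to $\beta\bds{V}\ge0$, and the boundary and periodicity conditions hold because $\bds{V}$ solves \eqref{3.3} and $K$ is constant; for $(\ud H,\ud V)$ the eigenvalue relations of \eqref{3.5} reduce the two lower-solution inequalities to $\ep\lm\phi_1\le0$ and $\lm\le-\ep\sigma_2\phi_1$, both valid for $\ep$ small since $\lm(\bds{V})<0$, and for $\ep$ small also $\ud V<\bds{V}$ and $(\ud H,\ud V)\le(\ol H,\ol V)$. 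On the order interval $[(\ud H,\ud V),(\ol H,\ol V)]\subset\{0\le V_{\!i}\le\bds{V}\}$ the reaction of \eqref{3.4} is quasimonotone nondecreasing, so the standard monotone iteration scheme for cooperative periodic-parabolic systems produces a solution $(\bds{H}_{\!i},\bds{V}_{\!i})$ of \eqref{3.4} with $\ep\phi\le(\bds{H}_{\!i},\bds{V}_{\!i})\le(K,\bds{V})$; it is positive, and $\bds{V}_{\!i}<\bds{V}$ follows from the $W$-equation of Step (a).

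\emph{Step (d) and conclusion.} The reaction $F=(F_1,F_2)$ of \eqref{3.4} is strictly sublinear on the cooperative region: $F_1$ is linear in $(H_{\!i},V_{\!i})$, while $F_2(\tau H_{\!i},\tau V_{\!i})=\tau F_2(H_{\!i},V_{\!i})+\tau(1-\tau)\sigma_2H_{\!i}V_{\!i}\ge\tau F_2(H_{\!i},V_{\!i})$ for $\tau\in(0,1)$, with strict inequality where $H_{\!i}V_{\!i}\ne0$. Given two positive solutions $u=(\bds{H}_{\!i},\bds{V}_{\!i})$ and $\tilde u$, the positivity of $\phi$ together with the Hopf lemma (in the Dirichlet case) and elementary a priori bounds yield $\ep_1\phi\le\tilde u$ and $u\le C\phi$ for suitable $\ep_1>0$, $C$, so $\tau^*=\sup\{\tau\in(0,1]:\tilde u\ge\tau u\}$ is well defined and positive. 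If $\tau^*<1$, then $\tau^*u$ is a subsolution of \eqref{3.4} that is strict in the second equation; the nonnegative difference $\tilde u-\tau^*u$ then solves a cooperative linear periodic-parabolic system with a nonnegative source that is nontrivial (otherwise $\sigma_2\bds{H}_{\!i}\bds{V}_{\!i}\equiv0$, which is impossible), so the strong maximum principle gives $\tilde u-\tau^*u>0$ in $Q_T$, and the Hopf lemma upgrades this to $\tilde u\ge(\tau^*+\delta)u$ for some $\delta>0$, contradicting the definition of $\tau^*$. Hence $\tilde u\ge u$, and by symmetry $\tilde u=u$. Together with the reduction stated at the beginning, Steps (a)--(d) give all the assertions of the theorem, including the form $(\bds{H}_{\!i},\bds{V}-\bds{V}_{\!i},\bds{V}_{\!i})$ of the positive solution of \eqref{3.1}. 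I expect Step (d) to be the main obstacle: carrying out the sublinear sweeping rigorously under the general boundary conditions \eqref{1.2}, in particular establishing $\tau^*>0$ and passing from interior positivity of $\tilde u-\tau^*u$ to the strict comparison $\tilde u\ge(\tau^*+\delta)u$ via the Hopf boundary lemma, and invoking the appropriate comparison and strong maximum principles for cooperative periodic-parabolic systems; by contrast, the upper and lower solutions of Step (c) and the a priori bound of Step (a) are routine comparisons with $\bds{V}$.
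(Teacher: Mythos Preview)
Your proof is correct and follows the same overall architecture as the paper (necessity via Poincar\'e map, existence via ordered upper and lower solutions, uniqueness via a sublinear sliding argument), but two steps are genuinely different in execution.

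For the necessity direction, the paper rewrites the nonlinear problem as $\frac{{\rm d}\bds{\Phi}}{{\rm d}t}+L(t)\bds{\Phi}=-F(t;\bds{\Phi})$ with the linear generator $L(t)$ corresponding to \eqref{3.5} shifted by $\lm(\bds{V})$, observes that $r(Q(T))=1$ for the associated evolution operator, and then invokes \cite[Theorem 3.2(iv)]{Am76} to derive a contradiction from the sign of $F$ when $\lm(\bds{V})\ge 0$. Your argument instead compares $(\bds{H}_{\!i},\bds{V}_{\!i})$ directly with the linear flow at $\lm=0$ and reads off $r({\cal A}_0)>1$ from ${\cal A}_0p\ge p$, ${\cal A}_0p\ne p$; this is more self-contained and avoids the external reference.

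For existence, the paper does not work directly with \eqref{3.4} but with the $\ep$-perturbed problems \eqref{3.6}--\eqref{3.6a}, taking as upper solution the pair $(\ol{\bds H}_{\!i},\bds V+\ep\varphi)$ where $\ol{\bds H}_{\!i}$ solves the auxiliary linear periodic problem \eqref{3.9}, and only at the very end sets $\ep=0$. Your choice $(\ol H,\ol V)=(K,\bds V)$ with a large constant $K$ is simpler and perfectly adequate for Theorem~\ref{th3.1} itself. The payoff of the paper's heavier construction is that the $\ep$-perturbed problems \eqref{3.6} and their unique positive solutions $(\bds H^+_{\!i,\ep},\bds V^+_{\!i,\ep})$ are exactly what is needed later in the proof of Theorem~\ref{th4.1}, where the comparison \eqref{4.3} forces one to sandwich $V$ between $\bds V\pm\ep\varphi$; your shortcut proves the theorem at hand but would have to be supplemented if you went on to the dynamics. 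Your derivation of $\bds V_{\!i}<\bds V$ via the $W$-equation is also used by the paper, but only later (Step~2 of Theorem~\ref{th4.1}); within Theorem~\ref{th3.1} the paper gets $\bds V_{\!i}<\bds V$ as a by-product of $\bds V^+_{\!i,\ep}<\bds V+\ep\varphi$ and $\ep\to 0$. The uniqueness arguments are essentially the same sliding method, with the paper carrying out both directions ($\bar s$ and $\ud k$) explicitly rather than appealing to symmetry.
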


\begin{proof}  {\bf The necessity}. Define
 \bess
 \mathbb{X}=\big\{\psi=(\psi_1,\psi_2):\,\psi_k\in C^{2+\alpha}(\overline\Omega),\;a_k\frac{\partial \psi_k}{\partial\nu}+b_k(\cdot,0)\psi_k=0\;\;{\rm on}\;\;\partial\Omega,\;k=1,2\big\},\eess
and
 \bess
L(t)=-\left(\begin{array}{cc} \nabla\cdot d_1(\cdot,t)\nabla-\rho(\cdot,t)+\lm(\bds{V}) & \sigma_1(\cdot, t)H_u(\cdot,t)\\[2mm]
 \sigma_2(\cdot,t)\bds{V} (\cdot,t)\;\; &\;\; \nabla\cdot d_2(\cdot,t)\nabla-\big(\mu_1(\cdot,t)+\mu_2(\cdot,t){\bds{V}} (\cdot,t)\big)+\lm(\bds{V}) \end{array}\right).
 \eess
For any given $t\geq 0$, let $Q(t)$ be the semigroup generated by $L(t)$ in $\mathbb{X}$. Then the Poincar\'{e} map $Q(T):\,\mathbb{X}\to \mathbb{X}$  is strongly positive and compact. Since $\phi=(\phi_1,\phi_2)^T$ satisfies \qq{3.5} and is positive, we have $\phi(\cdot,0)=\phi(\cdot,T)=Q(T)\phi(\cdot,0)$. It then follows that $r(Q(T))=1$.

If \qq{3.4} has a positive solution $(\bds{H}_{\!i}, \bds{V}_{\!i})$, then $ \bds{\Phi}(\cdot,t)=(\bds{H}_{\!i}(\cdot,t), \bds{V}_{\!i}(\cdot,t))^T$ satisfies
  \bess\begin{cases}
\frac{{\rm d}{\bds{\Phi}}}{{\rm d}t}+L(t){\bds{\Phi}}=-F(t;\bds{\Phi}), \ \,t>0,\\
\bds{\Phi}(\cdot,0)=\bds{\Phi}(\cdot,T),
 \end{cases}\eess
where
 \bess
F(t;\bds{\Phi})=\left(\begin{array}{cc} \lm(\bds{V})\bds{H}_{\!i}(\cdot,t)\\[2mm] \sigma_2(\cdot,t)\bds{V}_{\!i}(\cdot,t)\bds{H}_{\!i}(\cdot,t)+\lm(\bds{V})\bds{V}_{\!i}(\cdot,t) \end{array}\right).
 \eess
Therefore,
 \bess
r(Q(T))\bds{\Phi}(\cdot,0)=\bds{\Phi}(\cdot,0)=\bds{\Phi}(\cdot,T)=Q(T)\bds{\Phi}(\cdot,0)-\int_0^T Q(T-s)F(s;\bds{\Phi}(\cdot,s)){\rm d}s.
\eess
We assume on the contradiction that $\lm(\bds{V})\geq 0$. Then
  \bess
F(t;\bds{\Phi})\ge
\left(\begin{array}{cc} 0\\[1mm] \sigma_2(\cdot,t)\bds{V}_{\!i}(\cdot,t)\bds{H}_{\!i}(\cdot,t) \end{array}\right),
 \eess
and $\sigma_2(\cdot,t)\bds{V}_{\!i}(\cdot,t)\bds{H}_{\!i}(\cdot,t)>0$ in $[0,T]$. This is impossible by the conclusion \cite[Theorem 3.2 (iv)]{Am76}.

{\bf The sufficiency}.  Assume $\lm(\bds{V})<0$, we shall show that \eqref{3.4} has a unique positive solution $(\bds{H}_{\!i}, \bds{V}_{\!i})$ and $\bds{V}_{\!i}<{\bds{V}}$. To this aim, we prove the following general conclusion.
\begin{enumerate}[leftmargin=6mm]
\item[$\bullet$] Let $\varphi$ be the positive eigenfunction corresponding to $\zeta(\mu_1,\beta)$ of problem \eqref{3.2}. Normalize $\varphi$ by $\|\varphi\|_{L^{\infty}(\oo\times(0,T])}=1$. Then there is $0<\ep_0\ll 1$ such that, when $|\ep|\le\ep_0$, problems\vspace{-1mm}
 \bes\begin{cases}
\partial_t\bds{H}_{\!i}\!-\!\nabla\cdot d_1(x,t)\nabla\bds{H}_{\!i}\!=-\rho(x,t)\bds{H}_{\!i}+\sigma_1(x,t)H_u(x,t) \bds{V}_{\!i},&x\in\Omega,\;0<t\le T,\\
\partial_t\bds{V}_{\!i}\!-\!\nabla\cdot d_2(x,t)\nabla\bds{V}_{\!i}=\sigma_2(x,t)({\bds{V}}(x,t)
+\ep\varphi(x,t)-\bds{V}_{\!i})^+ \bds{H}_{\!i}\\
 \hspace{41mm}-\big(\mu_1(x,t)+\mu_2(x,t)({\bds{V}}(x,t)\!-\!\ep\varphi(x,t))\big)\bds{V}_{\!i},&x\in\Omega,\;0<t\le T,\\[2mm]
a_1\dd\frac{\partial\bds{H}_{\!i}}{\partial\nu}+b_1(x,t)\bds{H}_{\!i}=a_2\dd\frac{\partial\bds{V}_{\!i}}{\partial\nu}+b_2(x,t)\bds{V}_{\!i}=0, &x\in\partial\Omega,\;0<t\le T,\\[1mm]
 \bds{H}_{\!i}(x, 0)=\bds{H}_{\!i}(x, T),\;\;\; \bds{V}_{\!i}(x, 0)=\bds{V}_{\!i}(x, T), &x\in\Omega \vspace{-1mm}
	\end{cases}\lbl{3.6}\ees
 and
\bes\begin{cases}
\partial_t\bds{H}_{\!i}\!-\!\nabla\cdot d_1(x,t)\nabla\bds{H}_{\!i}\!=-\rho(x,t)\bds{H}_{\!i}+\sigma_1(x,t)H_u(x,t) \bds{V}_{\!i},&x\in\Omega,\;0<t\le T,\\
\partial_t\bds{V}_{\!i}\!-\!\nabla\cdot d_2(x,t)\nabla\bds{V}_{\!i}=\sigma_2(x,t)({\bds{V}}(x,t)
+\ep\varphi(x,t)-\bds{V}_{\!i})\bds{H}_{\!i}\\
 \hspace{41mm}-\big(\mu_1(x,t)+\mu_2(x,t)({\bds{V}}(x,t)
 \!-\!\ep\varphi(x,t))\big)\bds{V}_{\!i},&x\in\Omega,\;0<t\le T,\\[2mm]
a_1\dd\frac{\partial\bds{H}_{\!i}}{\partial\nu}+b_1(x,t)\bds{H}_{\!i}=a_2\dd\frac{\partial\bds{V}_{\!i}}{\partial\nu}+b_2(x,t)\bds{V}_{\!i}=0, &x\in\partial\Omega,\;0<t\le T,\\[1mm]
 \bds{H}_{\!i}(x, 0)=\bds{H}_{\!i}(x, T),\;\;\; \bds{V}_{\!i}(x, 0)=\bds{V}_{\!i}(x, T),&x\in\Omega \vspace{-1mm}
	\end{cases}\lbl{3.6a}\ees
have unique positive solutions $(\bds{H}_{\!i,\ep}^+, \bds{V}_{\!i, \ep}^+)$ and $(\bds{H}_{\!i,\ep}, \bds{V}_{\!i, \ep})$, respectively. Moreover,  $\bds{V}_{\!i, \ep}^+, \bds{V}_{\!i, \ep}<\bds{V}+\ep\varphi$ for $(x,t)\in\Omega\times[0,T]$, which implies that $(\bds{H}_{\!i,\ep}^+, \bds{V}_{\!i, \ep}^+)=(\bds{H}_{\!i,\ep}, \bds{V}_{\!i, \ep})$, and \qq{3.6} and \qq{3.6a} are equivalent.\vspace{-1mm}
\end{enumerate}

Here we only discuss the problem \eqref{3.6} since problem \eqref{3.6a} can be dealt with by the same way.

{\it Existence of positive solutions of \eqref{3.6}}.
Let $\lm({\bds{V}}; \ep)$ be the principal eigenvalue of
 \bes\begin{cases}
\partial_t\phi_1-\nabla\cdot d_1(x,t)\nabla\phi_1+\rho(x,t)\phi_1-\sigma_1(x,t)H_u(x,t)\phi_2
 =\lm\phi_1,\;\;&x\in\Omega,\;0<t\le T,\\
\partial_t\phi_2-\nabla\cdot d_2(x,t)\nabla\phi_2-\sigma_2(x,t)\big({\bds{V}}(x,t)\!+\!\ep\varphi(x,t)\big)\phi_1 \\ \hspace{20mm}+\big(\mu_1(x,t)+\mu_2(x,t)({\bds{V}}(x,t)\!-\!\ep\varphi(x,t))\big)\phi_2
=\lm\phi_2,\;\;&x\in\Omega,\;0<t\le T,\\[2mm]
 a_1\dd\frac{\partial\phi_1}{\partial\nu}+b_1(x,t)\phi_1=a_2\dd\frac{\partial\phi_2}{\partial\nu}+b_2(x,t)\phi_2=0,\;\;&x\in\partial\Omega,\;0<t\le T,\\[1mm]
 \phi_1(x,0)=\phi_1(x,T), \;\; \phi_2(x,0)=\phi_2(x,T),\;\;&x\in\Omega.
	\end{cases}\lbl{3.7}\ees
Since $\lm({\bds{V}})<0$, there exists a $0<\ep_0\ll 1$ such that, for any $|\ep|\le\ep_0$ we have $\lm({\bds{V}}; \ep)<0$ by the continuity of $\lm({\bds{V}}; \ep)$ in $\ep$, and
  \bes
{\bds{V}}\pm\ep\varphi>0, \;\; (\ep\varphi)^2\mu_2-\ep\varphi|\beta+\zeta(\mu_1,\beta)|<\beta{\bds{V}},\;\;\;
(x,t)\in\Omega\times[0,T]\lbl{3.8}\ees
(when $a_2=0$ and $b_2=1$, the above inequalities can be proved by adapting analogous methods as in \cite[Lemma 3.7]{Wpara}).

Now we consider the linear problem
 \bes\begin{cases}
\partial_t\ol{\bds{H}}_{\!i}-\nabla\cdot d_1(x,t)\nabla\ol{\bds{H}}_{\!i}+\rho(x,t)\ol{\bds{H}}_{\!i}\\
\hspace{15mm}=\sigma_1(x,t)H_u(x,t)({\bds{V}}(x,t)+\ep\varphi(x,t)),\;\;&x\in\Omega,\;0<t\le T,\\[2mm]
a_1\dd\frac{\partial\ol{\bds{H}}_{\!i}}{\partial\nu}+b_1(x,t)\ol{\bds{H}}_{\!i}=0,\;\;&x\in\partial\Omega,\;0<t\le T,\\[1mm]
 \ol{\bds{H}}_{\!i}(x,0)=\ol{\bds{H}}_{\!i}(x,T),\;\;&x\in\Omega.
	\end{cases}\lbl{3.9} \ees
Let $\gamma(\rho)$ be the principal eigenvalue of
 \bess\begin{cases}
\partial_t\eta-\nabla\cdot d_1(x,t)\nabla\eta+\rho(x,t)\eta=\gamma\eta,\;\;&x\in\Omega,\;0<t\le T,\\[2mm]
a_1\dd\frac{\partial\eta}{\partial\nu}+b_1(x,t)\eta=0,\;\;&x\in\partial\Omega,\;0<t\le T,\\[1mm]
 \eta(x,0)=\eta(x,T),\;\;&x\in\Omega.
	\end{cases} \eess
Then $\gamma(\rho)>0$ since $\rho(x,t)>0$. Let $\eta(x,t)$ be the positive eigenfunction corresponding to $\gamma(\rho)$ with $\|\eta\|_{L^\infty(\oo\times(0,T])}=1$. We can find a positive constant $C$ such that
 \bess
 \sigma_1(x,t)H_u(x,t)({\bds{V}}(x,t)+\ep\varphi(x,t))\le C\gamma(\rho)\eta(x,t)\eess
for $(x,t)\in\Omega\times(0,T]$ (when $a_1=0$ and $b_1=1$, proofs of the above inequalities are similar to that of \cite[Lemma 3.7]{Wpara}). Clearly, $C\eta(x,t)$ and $0$ are the ordered upper and lower solutions of \qq{3.9}. By the upper and lower solutions method (\cite[Theorem 7.3]{Wpara}), the problem \qq{3.9} has at least one solution, denoted by $\ol{\bds{H}}_{\!i}$,  which is positive since
 \[\sigma_1(x,t)H_u(x,t)({\bds{V}}(x,t)+\ep\varphi(x,t))\ge 0,\,\not\equiv 0.\]
Clearly, $\ol{\bds{H}}_{\!i}$ is unique.

Making use of \qq{3.8} and \qq{3.9} we can show that $(\ol{\bds{H}}_{\!i}(x,t), {\bds{V}}(x,t)+\ep\varphi(x,t))$ is a strict upper solution of \eqref{3.6}. Let $(\phi_1^{\ep},\phi_2^{\ep})$ be the positive eigenfunction corresponding to $\lm({\bds{V}}; \ep)$. It is easy to verify that $\delta(\phi_1^{\ep}, \phi_2^{\ep})$ is a lower solution of \eqref{3.6}, and \bess
 \delta(\phi_1^{\ep}(x,t), \phi_2^{\ep}(x,t))\le(\ol{\bds{H}}_{\!i}(x,t), {\bds{V}}(x,t)+\ep\varphi(x,t)),\;\;\;(x,t)\in\Omega\times[0,T]
 \eess
provided that $\delta>0$ is suitably small. By the upper and lower solutions method (\cite[Theorem 7.15]{Wpara}), \eqref{3.6} has at least one positive solution $(\bds{H}^+_{\!i,\ep}, \bds{V}^+_{\!i, \ep})$, and $\bds{V}^+_{\!i, \ep}<\bds{V}+\ep\varphi$ for $(x,t)\in\Omega\times[0,T]$.\vskip 2pt

{\it Uniqueness of positive solutions of \eqref{3.6}}. Let $({\bds H}^*_{\!i, \ep}, {\bds V}^*_{\!i,\ep})$ be another positive periodic solution of \eqref{3.6}. We can find a constant $0<s<1$ such that $s({\bds H}^*_{\!i,\ep}, {\bds V}^*_{\!i,\ep})\le(\bds{H}^+_{\!i,\ep}, \bds{V}^+_{\!i,\ep})$ for $(x,t)\in\Omega\times[0,T]$. Set
  \[\bar s=\sup\big\{0<s\le 1: s({\bds H}^*_{\!i,\ep}, {\bds V}^*_{\!i,\ep})\le(\bds{H}^+_{\!i,\ep}, \bds{V}^+_{\!i,\ep}) \;{\rm ~ in  ~ } \Omega\times[0,T]\big\}.\]
Then $0<\bar s\le1$ and $\bar s({\bds H}^*_{\!i,\ep}, {\bds V}^*_{\!i,\ep})\le(\bds{H}^+_{\!i,\ep}, \bds{V}^+_{\!i,\ep})$ for $(x,t)\in\Omega\times[0,T]$. We shall prove $\bar s=1$. If $\bar s<1$, then $\bds{U}:=\bds{H}^+_{\!i,\ep}-\bar s{\bds H}^*_{\!i,\ep}\ge 0$ and $\bds{Z}:=\bds{V}^+_{\!i,\ep}-\bar s{\bds V}^*_{\!i,\ep}\ge 0$. In view of $\bar s{\bds V}^*_{\!i,\ep}\le\bds{V}^+_{\!i,\ep}<{\bds{V}}+\ep\varphi$ and ${\bds V}^*_{\!i,\ep}>\bar s{\bds V}^*_{\!i,\ep}$, it follows that
 \bess
 &({\bds{V}}+\ep\varphi-\bds{V}^+_{\!i,\ep})^+={\bds{V}}+\ep\varphi-\bds{V}^+_{\!i,\ep},\;\;\;
 (\bds{V}+\ep\varphi-{\bds V}^*_{\!i,\ep})^+
 <\bds{V}+\ep\varphi-\bar s{\bds V}^*_{\!i,\ep},&\\
& (\bds{V}+\ep\varphi-\bds{V}^+_{\!i,\ep})^+-(\bds{V}+\ep\varphi-{\bds V}^*_{\!i,\ep})^+>\bar s{\bds V}^*_{\!i,\ep}-\bds{V}^+_{\!i,\ep}=-\bds{Z}&
 \eess
for $(x,t)\in\oo\times(0,T]$. After careful calculation, it derives that
\bess\begin{cases}
\partial_t\bds{U}-\nabla\cdot d_1(x,t)\nabla\bds{U}=\sigma_1(x,t)H_u(x,t)\bds{Z}-\rho(x,t)\bds{U},\;\;&x\in\Omega,\;0<t\le T,\\
\partial_t\bds{Z}-\nabla\cdot d_2(x,t)\nabla\bds{Z}>\sigma_2(x,t)\big(\bds{V}(x,t)+\ep\varphi(x,t)
-\bds{V}^+_{\!i,\ep}\big)\bds{U}\\[0.1mm]
 \hspace{11mm}-\Big(\mu_1(x,t)+\mu_2(x,t)({\bds{V}}(x,t)-\ep\varphi(x,t))
+\sigma_2(x,t){\bds H}^*_{\!i,\ep}\Big)\bds{Z},\;\;&x\in\Omega,\;0<t\le T,\\[2mm]
a_1\dd\frac{\partial\bds{U}}{\partial\nu}+b_1(x,t)\bds{U}
=a_2\frac{\partial\bds{Z}}{\partial\nu}+b_2(x,t)\bds{Z}=0,\;\;&x\in\partial\Omega,\;0<t\le T,\\[1mm]
\bds{U}(x,0)=\bds{U}(x,T),\ \bds{Z}(x,0)=\bds{Z}(x,T),\;\;&x\in\Omega.
	\end{cases}\eess
In view of $\bds{V}+\ep\varphi-\bds{V}^+_{\!i,\ep}>0$ and $\bds{U}, \bds{Z}\ge 0$. It follows that $\bds{U},\bds{Z}>0$ for $(x,t)\in\Omega\times[0,T]$ by the maximum principle. Then there exists $0<\tau<1-\bar s$ such that $(\bds{U}, \bds{Z})\ge\tau({\bds H}^*_{\!i,\ep}, {\bds V}^*_{\!i,\ep})$, i.e.,
 \bess
 (\bar s+\tau)({\bds H}^*_{\!i,\ep}, {\bds V}^*_{\!i,\ep})\le(\bds{H}^+_{\!i,\ep}, \bds{V}^+_{\!i,\ep}),\;\;\;(x,t)\in\oo\times[0,T].\eess
This contradicts the definition of $\bar s$. Hence $\bar s=1$, i.e., $({\bds H}^*_{\!i,\ep}, {\bds V}^*_{\!i,\ep})\le(\bds{H}^+_{\!i,\ep}, \bds{V}^+_{\!i,\ep})$ for $(x,t)\in\Omega\times[0,T]$. Certainly, ${\bds V}^*_{\!i,\ep}<\bds{V}+\ep\varphi$, and $(\bds{V}+\ep\varphi-{\bds V}^*_{\!i,\ep})^+
=\bds{V}+\ep\varphi-{\bds V}^*_{\!i,\ep}$.

On the other hand, we can find $k>1$ such that $k({\bds H}^*_{\!i,\ep}, {\bds V}^*_{\!i,\ep})\ge(\bds{H}^+_{\!i,\ep}, \bds{V}^+_{\!i,\ep})$ for $(x,t)\in\Omega\times[0,T]$. Set
  \[\ud k=\inf\big\{k\ge1: k({\bds H}^*_{\!i,\ep}, {\bds V}^*_{\!i,\ep})\ge(\bds{H}^+_{\!i,\ep}, \bds{V}^+_{\!i,\ep})\;{\rm ~ in  ~ } \Omega\times[0,T]\big\}.\]
Then $\ud k$ is well defined, $\ud k\ge1$ and $\ud k({\bds H}^*_{\!i,\ep}, {\bds V}^*_{\!i,\ep})\ge(\bds{H}^+_{\!i,\ep}, \bds{V}^+_{\!i,\ep})$ for $(x,t)\in\Omega\times[0,T]$. If $\ud k>1$, then $\bds{P}:=\ud k{\bds H}^*_{\!i,\ep}-\bds{H}^+_{\!i,\ep}\ge 0$ and $\bds{Q}:=\ud k{\bds V}^*_{\!i,\ep}-\bds{V}^+_{\!i,\ep}\ge 0$. Similarly to the above, we can derive $\bds{P}, \bds{Q}>0$ for $(x,t)\in\Omega\times[0,T]$ by the maximum principle, and there exists $0<r<\ud k-1$ such that $(\bds{P}, \bds{Q})\ge r({\bds H}^*_{\!i,\ep}, {\bds V}^*_{\!i,\ep})$, i.e.,
 \[(\ud k-r)({\bds H}^*_{\!i,\ep}, {\bds V}^*_{\!i,\ep})\ge(\bds{H}^+_{\!i,\ep}, \bds{V}^+_{\!i,\ep}),\;\;\;(x,t)\in\oo\times[0,T].\]
This contradicts the definition of $\ud k$. Hence $\ud k=1$ and $({\bds H}^*_{\!i,\ep}, {\bds V}^*_{\!i,\ep})\ge(\bds{H}^+_{\!i,\ep}, \bds{V}^+_{\!i,\ep})$ for $(x,t)\in\Omega\times[0,T]$. The uniqueness of positive solutions of \eqref{3.6} is obtained.

Taking $\ep=0$ in the above, we obtain the  desired conclusion.
\end{proof}\setcounter{equation}{2}

\section{Dynamical properties of \qq{1.1} and \qq{1.2}}\lbl{s3}\setcounter{equation}{0}

\begin{theo}\lbl{th4.1} Let $H_{i0}, V_{u0}, V_{i0}\in C^1(\overline\Omega)$ be positive and satisfy $a_1\frac{\partial H_{i0}}{\partial\nu}+b_1(x,0)H_{i0}=0$ and $a_2\frac{\partial V_{u0}}{\partial\nu}+b_2(x,0)V_{u0}=a_2\frac{\partial V_{i0}}{\partial\nu}+b_2(x,0)V_{i0}=0$ on $\partial\Omega$. Let $(H_i, V_u, V_i)$ be the unique positive solution of \qq{1.1} and \qq{1.2}. Then we have the following conclusions.

{\rm(i)}\; If $\zeta(\mu_1,\beta)<0$ and $\lm(\bds{V})<0$, then
 \bes
\lim_{n\to\yy}(H_i(x,t+nT),\, V_u(x,t+nT),\, V_i(x,t+nT))=(\bds{H}_{\!i}, \bds{V}-\bds{V}_{\!i}, \bds{V}_{\!i})\;\;\; {\rm in}\; \;[C^{2,1}(\overline\Omega\times[0,T])]^3,
\vspace{-2mm}
 \lbl{4.1}\ees
where $\bds{V}$ and $(\bds{H}_{\!i}, \bds{V}_{\!i})$ are the unique positive solutions of \qq{3.3} and \qq{3.4}, respectively, and $\bds{V}_{\!i}<\bds{V}$.

{\rm(ii)}\; If $\zeta(\mu_1,\beta)<0$ and $\lm({\bds{V}})\ge0$, then
 \bes
\lim_{n\to\yy}(H_i(x,t+nT),\, V_u(x,t+nT),\, V_i(x,t+nT))=(0, {\bds{V}}, 0)\;\;\;
{\rm in}\;\; [C^{2,1}(\overline\Omega\times[0,T])]^3.\vspace{-2mm}
 \lbl{4.2}\ees

{\rm(iii)}\; If $\zeta(\mu_1,\beta)\ge 0$, then
 \bess
\lim_{n\to\yy}(H_i(x,t+nT),\, V_u(x,t+nT),\, V_i(x,t+nT))=(0, 0, 0)\;\;\;
{\rm in}\;\; [C^{2,1}(\overline\Omega\times[0,T])]^3.\vspace{-2mm}
 \eess
  \end{theo}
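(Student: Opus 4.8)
The plan is to decouple the analysis into the total vector density $V:=V_u+V_i$ and the infected pair $(H_i,V_i)$. Summing the $V_u$- and $V_i$-equations shows that $V$ solves the scalar periodic logistic problem that is the evolution counterpart of \qq{3.3}, with positive nontrivial initial datum $V_{u0}+V_{i0}$; so by the Hess theory already used in Section~3 (\cite[Theorem~28.1]{Hess} and its parabolic version), $V(\cdot,t+nT)\to0$ in $C^{2,1}(\bar Q_T)$ when $\zeta(\mu_1,\beta)\ge0$, and $V(\cdot,t+nT)\to\bds{V}$ when $\zeta(\mu_1,\beta)<0$. This settles (iii) at once: from $0\le V_u,V_i\le V\to0$ we get $V_u,V_i\to0$, and then $H_i$, which satisfies ${\cal L}_1[H_i]+\rho H_i=\sigma_1 H_u V_i$ with a right-hand side tending to $0$ and with $\gamma(\rho)>0$, is squeezed between $0$ and the decaying solution of the corresponding linear problem, hence $H_i\to0$; parabolic Schauder estimates then give the convergence in $C^{2,1}(\bar Q_T)$. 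For (i) and (ii) we have $\zeta(\mu_1,\beta)<0$ and $V\to\bds{V}$, and since $V_u=V-V_i$ it remains only to identify the limit of $(H_i,V_i)$.

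The core of the argument is a squeeze between the two perturbed periodic systems \qq{3.6} and \qq{3.6a}. Fix $\ep\in(0,\ep_0]$, $\ep_0$ being the small constant of Theorem~\ref{th3.1}. From $V(\cdot,t+nT)\to\bds{V}$ in $C^1$ — together with, in the Dirichlet case, the Hopf lemma applied to both $\bds{V}$ and the principal eigenfunction $\varphi$ of \qq{3.2} exactly as in \cite[Lemma~3.7]{Wpara} — there is $t_\ep>0$ with $\bds{V}-\ep\varphi\le V\le\bds{V}+\ep\varphi$ in $\oo\times[t_\ep,\infty)$. Using $V_u=V-V_i\ge0$ and these two-sided bounds, a direct computation (keeping the $(\cdot)^+$ truncation of \qq{3.6} so as to work with a globally cooperative system) shows that, on $\oo\times[t_\ep,\infty)$, $(H_i,V_i)$ is a sub-solution of the $\ep$-version of \qq{3.6} and a super-solution of its $(-\ep)$-version. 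Since $H_{i0},V_{i0}>0$, the strong maximum principle gives $H_i,V_i>0$ for $t>0$, so the comparison principle for these cooperative periodic systems traps $(H_i,V_i)(\cdot,t+nT)$, as $n\to\infty$, between the solutions of the $(-\ep)$- and $\ep$-perturbed problems issued from the data $(H_i(\cdot,t_\ep),V_i(\cdot,t_\ep))$.

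In case (i) I would shrink $\ep_0$ so that $\lm(\bds{V};\pm\ep)<0$ for $|\ep|\le\ep_0$; by the intermediate claim in the proof of Theorem~\ref{th3.1} the $(\pm\ep)$-systems then have unique positive periodic solutions $(\bds{H}_{\!i,\pm\ep},\bds{V}_{\!i,\pm\ep})$, which are globally attracting from the interior of the positive cone by the theory of monotone, subhomogeneous periodic semiflows (the relevant subhomogeneity being the one exploited through the scalars $\bar s$ and $\ud k$ in the uniqueness proof). The squeeze then gives
 \[(\bds{H}_{\!i,-\ep},\bds{V}_{\!i,-\ep})\le\liminf_n(H_i,V_i)(\cdot,t+nT)\le\limsup_n(H_i,V_i)(\cdot,t+nT)\le(\bds{H}_{\!i,\ep}^+,\bds{V}_{\!i,\ep}^+).\]
Letting $\ep\to0$ and using the continuity of $\ep\mapsto\lm(\bds{V};\ep)$ and the uniqueness of the perturbed positive periodic solutions (hence, via the uniform bounds and parabolic compactness, their continuous dependence on $\ep$, with any subsequential limit a positive — by full coupling — solution of \qq{3.4}, forced to equal $(\bds{H}_{\!i},\bds{V}_{\!i})$), both bounds converge to $(\bds{H}_{\!i},\bds{V}_{\!i})$, whence $(H_i,V_u,V_i)(\cdot,t+nT)\to(\bds{H}_{\!i},\bds{V}-\bds{V}_{\!i},\bds{V}_{\!i})$, upgraded to $C^{2,1}(\bar Q_T)$ by Schauder estimates. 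In case (ii) only the upper half of the squeeze is used. If $\lm(\bds{V})>0$, then $\lm(\bds{V};\ep)>0$ for small $\ep>0$, the $\ep$-system \qq{3.6} has no positive periodic solution and its solutions decay to $0$, so $\limsup_n(H_i,V_i)(\cdot,t+nT)\le0$; if $\lm(\bds{V})=0$ (so $\lm(\bds{V};\ep)$ may be $<0$ for small $\ep>0$), the squeeze gives $\limsup_n(H_i,V_i)(\cdot,t+nT)\le(\bds{H}_{\!i,\ep}^+,\bds{V}_{\!i,\ep}^+)$ and I would show $(\bds{H}_{\!i,\ep}^+,\bds{V}_{\!i,\ep}^+)\to0$ as $\ep\to0^+$: the family is uniformly bounded, so any $C^{2,1}$-subsequential limit is a nonnegative periodic solution of \qq{3.4}, which must vanish because a nontrivial one would be positive (full coupling) and hence forbidden by Theorem~\ref{th3.1} when $\lm(\bds{V})=0$. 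In both subcases $(H_i,V_i)\to0$ and $V_u=V-V_i\to\bds{V}$, giving \qq{4.2}.

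I expect the delicate points to be: the borderline case $\lm(\bds{V})=0$ in (ii), where decay cannot be read off from a strictly positive eigenvalue and one must instead control the vanishing of the perturbed positive solutions as $\ep\to0^+$; the rigorous passage from $C^{2,1}$-convergence of $V$ to the pointwise trapping $\bds{V}-\ep\varphi\le V\le\bds{V}+\ep\varphi$ near $\partial\oo$ under Dirichlet conditions, which needs the Hopf-lemma comparison; and the supply of the global-attractivity and continuous-dependence statements for \qq{3.6}/\qq{3.6a} — i.e.\ promoting the existence-uniqueness claim inside the proof of Theorem~\ref{th3.1} to a full threshold dynamics. By contrast, the sub/super-solution bookkeeping with the $(\cdot)^+$ truncations and the invariant region $\{V_i<V\}$ is routine, if somewhat lengthy.
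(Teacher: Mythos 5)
Your proposal is correct and follows essentially the same route as the paper's proof: decouple $V=V_u+V_i$ and handle it by the scalar periodic logistic theory; trap $V$ between $\bds{V}\pm\ep\varphi$ (with the Hopf-lemma argument near $\partial\oo$ in the Dirichlet case); squeeze $(H_i,V_i)$ between the $(-\ep)$- and $\ep$-perturbed cooperative systems \eqref{3.6}/\eqref{3.6a} and pass to the monotone periodic limits furnished by Theorem \ref{th3.1}; and treat the critical case $\lm(\bds{V})=0$ by letting the unique positive periodic solutions of the $\ep$-system vanish as $\ep\to0^+$. The paper implements the global attractivity you attribute to ``monotone subhomogeneous periodic semiflow'' theory more hands-on, via explicit monotone iterates $(U_\ep^n,Z_\ep^n)$ and $(P_\tau^n,R_\tau^n)$ generated from ordered periodic upper/lower solutions, and its Step~2 additionally verifies (via the intermediate Claim $\bds{V}_{\!i}<\bds{V}-2\tau\varphi$) that the trajectory enters and stays in the region $\{V_i<\bds{V}-\tau\varphi\}$ before setting up the lower comparison; these are technical elaborations of exactly the scheme you outline.
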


\begin{proof} By use of the regularity theory and compactness argument, it suffices to show that these limits hold uniformly in $\overline\Omega\times[0,T]$.

(i)\, Assume that $\zeta(\mu_1,\beta)<0$ and $\lm({\bds{V}})<0$.

{\it Step 1}. Since $\lm({\bds{V}})<0$, there exists a $0<\ep_0\ll 1$ such that $\lm(\bds{V}; \ep)<0$ when $|\ep|\le\ep_0$.
Let $(H_i, V_u, V_i)$ be the unique solution of \qq{1.1}, and set $V=V_u+V_i$. Then we have that
 \bess
	\begin{cases}
\partial_t V-\nabla\cdot d_2(x,t)\nabla V=\big(\beta(x,t)-\mu_1(x,t)\big)V-\mu_2(x,t)V^2,\;\;&x\in\Omega,\; t>0,\\[1mm]
a_2\dd\frac{\partial V}{\partial\nu}+b_2(x,t)V=0,\;\;&x\in\partial\Omega,\; t>0,\\[1mm]
V(x,0)=V_u(x,0)+V_i(x,0)>0,&\;x\in\Omega,
	\end{cases}
	\eess
and that $\dd\lim_{n\to\yy}V(x,t+nT)={\bds{V}}(x,t)$ in $C^{2,1}(\overline\Omega\times[0,T])$. Let $\varphi(x,t)$ be the positive eigenfunction corresponding to $\zeta(\mu_1,\beta)$ of \eqref{3.2}. Normalize $\varphi$ by $\|\varphi\|_{L^{\infty}(\oo\times(0,T])}=1$. Using analogous methods as in  \cite{Wang24} we can show that for any given $0<\ep\le\ep_0$, there exists a $N\gg 1$ such that
 \bes
0<\bds{V}(x,t)-\ep\varphi(x,t)\le V(x,t+nT)\le \bds{V}(x,t)+\ep\varphi(x,t),\;\;\;(x,t)\in\Omega\times[0,T]\lbl{4.3}
 \ees
for all $n\geq N$. In fact, for the case $a_2=1$, where $a_2$ is given in ${\cal B}_2$, we have $\min_{\overline\Omega\times[0,T]}\varphi(x,t)>0$. Thus, $|V(x,t+nT)-\bds{V}(x,t)|<\ep\varphi(x,t)$ for $(x,t)\in\Omega\times(0,T]$ when $n$ is large. For the case $a_2=0$, we have $\varphi(x,t)>0$ for $(x,t)\in\oo\times(0,T]$, $\varphi(x,t)=0$  for $(x,t)\in\pt\oo\times[0,T]$ and $\partial\varphi(x,t)/\partial\nu<0$ for $(x,t)\in\pt\oo\times[0,T]$, and $(V(x,t+nT)-\bds{V}(x,t))=0$ for $(x,t)\in\pt\oo\times[0,T]$. Since $\dd\lim_{n\to\yy}(V(x,t+nT)-{\bds{V}}(x,t))=0$ in $C^{2,1}(\overline\Omega\times[0,T])$, it is easy to see that there exists $N_1\gg 1$ such that
  \[\frac{\partial\big(V(x,t+nT)-{\bds{V}}(x,t)\big)}{\partial\nu}-\ep\frac{\partial\varphi(x,t)}{\partial\nu}\ge \frac \ep 2\min_{\pt\oo\times[0,T]}\kk|\frac{\partial\varphi(x,t)}{\partial\nu}\rr|>0
  ,\;\;\;(x,t)\in\pt\Omega\times[0,T]\]
for all $n\ge N_1$. This, together with
 \bess
 V(x,t+nT)-\bds{V}(x,t)-\ep\varphi(x,t)=0,\;\;\;(x,t)\in\pt\oo\times[0,T],
 \eess
asserts that there exists $\Omega_0\Subset\Omega$ such that
  \[V(x,t+nT)-\bds{V}(x,t)-\ep\varphi(x,t)<0,\;\;\;(x,t)\in(\Omega\setminus\Omega_0)\times[0,T]\]
for all $n\ge N_1$. Since $\ep\varphi(x,t)>0$ for  $(x,t)\in\overline\Omega_0\times[0,T]$ and $V(x,t+nT)-\bds{V}(x,t)\to 0$ in $C(\overline\Omega_0\times[0,T])$ as $n\to\yy$, there exists $N_2\gg 1$ such that
 \[V(x,t+nT)-\bds{V}(x,t)-\ep\varphi(x,t)<0,\;\;\;(x,t)\in\overline\Omega_0\times[0,T]\]
for all $n\ge N_2$. Thus, $V(x,t+nT)-\bds{V}(x,t)-\ep\varphi(x,t)<0$, i.e., $V(x,t+nT)<\bds{V}(x,t)+\ep\varphi(x,t)$ for $(x,t)\in\Omega\times(0,T]$ and $n\ge\max\{N_1, N_2\}$. Similarly, we can prove that $V(x,t+nT)>\bds{V}(x,t)-\ep\varphi(x,t)$ for $(x,t)\in\Omega\times(0,T]$ when $n$ is large.

Making use of $V_u=V-V_i$ and \qq{4.3}, we see that $(H_i, V_i)$ satisfies
 \bess\begin{cases}
\partial_t H_i-\nabla\cdot d_1(x,t)\nabla H_i=-\rho(x,t)H_i+\sigma_1(x,t)H_u(x,t)V_i,\;\;&x\in\Omega,\;t>NT,\\
\partial_t V_i-\nabla\cdot d_2(x,t)\nabla V_i\leq\sigma_2(x,t)({\bds{V}}(x,t)+\ep\varphi(x,t)-V_i)^+H_i\\
 \hspace{41mm}-\big(\mu_1(x,t)+\mu_2(x,t)({\bds{V}}(x,t)
 -\ep\varphi(x,t))\big)V_i,\;\;&x\in\Omega,\;t>NT,\\[2mm]
a_1\dd\frac{\partial H_i}{\partial\nu}+b_1(x,t)H_i=
a_2\frac{\partial V_i}{\partial\nu}+b_2(x,t)V_i=0,\;\;&x\in\pt\Omega,\;t>NT.
 \end{cases}\eess
Let $(\bds{H}^+_{\!i,\ep}, \bds{V}^+_{\!i,\ep})$ be the unique positive $T$-periodic solution of \eqref{3.6}, and $(\phi^\ep_1,\phi^\ep_2)$ be the positive eigenfunction corresponding to $\lm({\bds{V}}; \ep)$. We can take constants $k\gg 1$ and $0<\delta\ll 1$ such that
 \bess
 k\kk(\bds{H}^+_{\!i,\ep}(x,0), \dd\bds{V}^+_{\!i,\ep}(x,0)\rr) \ge(H_i(x, NT), V_i(x, NT))\ge\delta\kk(\phi^\ep_1(x,0),\dd\phi^\ep_2(x,0)\rr), \;\;\; x\in\Omega.
 \eess
Moreover, it is easy to verify that $k(\bds{H}^+_{\!i,\ep}, \bds{V}^+_{\!i,\ep})$ and $\delta(\phi^\ep_1,\phi^\ep_2)$ are the ordered upper and lower periodic solutions of \eqref{3.6} provided that $k\gg 1$ and $0<\delta\ll 1$. Let $(U_\ep, Z_\ep)$ be the unique positive solution of
 \bes\begin{cases}
\partial_t U_\ep-\nabla\cdot d_1(x,t)\nabla U_\ep=-\rho(x,t)U_\ep+\sigma_1(x,t)H_u(x,t)Z_\ep,\;\;&x\in\Omega,\;t>0,\\
\partial_t Z_\ep-\nabla\cdot d_2(x,t)\nabla Z_\ep=\sigma_2(x,t)({\bds{V}}(x,t)+\ep\varphi(x,t)-Z_\ep)^+U_\ep\\
 \hspace{42mm}-\big(\mu_1(x,t)+\mu_2(x,t)({\bds{V}}(x,t)-
 \ep\varphi(x,t))\big)Z_\ep,\;\;&x\in\Omega,\;t>0,\\[2mm]
a_1\dd\frac{\partial U_\ep}{\partial\nu}+b_1(x,t) U_\ep=a_2\dd\frac{\partial Z_\ep}{\partial\nu}+b_2(x,t)Z_\ep=0,\;\;&x\in\pt\Omega,\;t>0,\\[1mm]
(U_\ep(x,0), Z_\ep(x,0))=k(\bds{H}^+_{\!i,\ep}(x,0), \bds{V}^+_{\!i,\ep}(x,0)),&x\in\Omega.
 \end{cases}\lbl{4.4}\ees
By the comparison principle, $(U_\ep(x,t), Z_{\ep}(x,t))\ge\delta\kk(\phi^\ep_1(x,t),\dd\phi^\ep_2(x,t)\rr)$, and
\bes
(H_i(x,t+NT),V_i(x,t+NT))\leq (U_\ep(x,t), Z_{\ep}(x,t))\leq (k\bds{H}^+_{\!i,\ep}(x,t),k\bds{V}^+_{\!i,\ep}(x,t))\lbl{4.5}\ees
for $x\in\Omega$ and $t>0$. Define $U_\ep^n(x,t)=U_\ep(x,t+nT)$ and $Z_\ep^n(x,t)=Z_\ep(x,t+nT)$ for $(x,t)\in\Omega\times[0,T]$. Then
 \bes\begin{cases}
 U_\ep^n(x,t)\ge\delta\phi^\ep_1(x,t+nT)
 =\delta\phi^\ep_1(x,t),\;\;\;(x,t)\in\Omega\times(0,T],\\
  Z_\ep^n(x,t)\ge\delta\phi^\ep_2(x,t+nT)=
  \delta\phi^\ep_2(x,t),\;\;\;(x,t)\in\Omega\times(0,T].
  \end{cases}\lbl{4.6}\ees
On the other hand, since  $d_i(x,t)$, $\rho(x,t)$, $\sigma_i(x,t)$, ${\bds{V}}(x,t)$, $\varphi(x,t)$ and $\mu_i(x,t)$ $(i=1,2)$ are time periodic functions with periodic $T$, we have
\bess\begin{cases}
\partial_t U_\ep^n-\nabla\cdot d_1(x,t)\nabla U_\ep^n=-\rho(x,t)U_\ep^n+\sigma_1(x,t) H_u(x,t)Z_\ep^n,\;\;&x\in\Omega,\;0<t\le T,\\
\partial_t Z_\ep^n-\nabla\cdot d_2(x,t)\nabla Z_\ep^n=\sigma_2(x,t)(\bds{V}(x,t)+\ep\varphi(x,t)-Z_\ep^n)^+U_\ep^n\\
 \hspace{43mm}-\kk[\mu_1(x,t)+\mu_2(x,t)(\bds{V}(x,t)-\ep\varphi(x,t))\rr]Z_\ep^n,\;\;\;&x\in\Omega,\;0<t\le T,\\[2mm]
a_1\dd\frac{\partial U_\ep^n}{\partial\nu}+b_1(x,t)U_\ep^n
=a_2\frac{\partial Z_\ep^n}{\partial\nu}+b_2(x,t)Z_\ep^n=0,\;\;&x\in\partial\Omega,\;0<t\le T,\\[1mm]
U_\ep^n(x,0)=U_\ep^{n-1}(x,T), \ \ Z_\ep^n(x,0)=Z_\ep^{n-1}(x,T),\;\;&x\in\Omega.
\end{cases}\eess
Note that
\bess
&U_\ep^1(x,0)=U_\ep(x,T)\leq k\bds{H}^+_{\!i,\ep}(x,T)
=k\bds{H}^+_{\!i,\ep}(x,0)=U_\ep(x,0),\;\;\;x\in\Omega,\\
&Z_\ep^1(x,0)=Z_\ep(x,T)\leq k\bds{V}^+_{\!i,\ep}(x,T)
=k\bds{V}^+_{\!i,\ep}(x,0)=Z_\ep(x,0),\;\;\;x\in\Omega.
\eess
By the comparison principle,
\bess
(U_\ep^1(x,t), Z_\ep^1(x,t))\leq (U_\ep(x,t), Z_\ep(x,t)),\;\;\;(x,t)\in\Omega\times(0,T],
\eess
and then
  \[U_\ep^2(x,0)=U_\ep^1(x,T)\leq U_\ep(x,T)=U_\ep^1(x,0),\;\;\; Z_\ep^2(x,0)=Z_\ep^1(x,T)\leq Z_\ep(x,T)=Z_\ep^1(x,0)\]
for $x\in\Omega$. It is derived that
 \[(U_\ep^2(x,t), Z_\ep^2(x,t))\leq (U_\ep^1(x,t),
  Z_\ep^1(x,t)),\;\;(x,t)\in\Omega\times(0,T]\]
by the comparison principle. Utilizing the inductive method, we can show that $U_\ep^n$ and $Z_\ep^n$ are monotonically decreasing in $n$.  This combined with \qq{4.6} indicates that there exists a function pair $(\bds{U}_{\!\!\ep}, \bds{Z}_\ep)$ satisfying
 \[(\bds{U}_{\!\!\ep}(x,t), \bds{Z}_\ep(x,t))\ge\delta
 \kk(\phi^\ep_1(x,t),\dd\phi^\ep_2(x,t)\rr),\;\;\;(x,t)\in\Omega\times(0,T]\]
such that $(U_\ep^n, Z_\ep^n)\to (\bds{U}_{\!\!\ep}, \bds{Z}_\ep)$ pointwisely in $\overline\Omega\times[0,T]$ as $n\to\infty$. Clearly, $\bds{U}_{\!\!\ep}(\cdot,0)=\bds{U}_{\!\!\ep}(\cdot, T)$ and $\bds{Z}_\ep(\cdot,0)=\bds{Z}_\ep(\cdot, T)$. By use of the regularity theory and compactness argument, it can be proved that $(U_\ep^n, Z_\ep^n)\to (\bds{U}_{\!\!\ep}, \bds{Z}_\ep)$ in $[C^{2,1}(\overline\Omega\times[0,T])]^2$ as $n\to\infty$. Clearly, $(\bds{U}_{\!\!\ep}, \bds{Z}_\ep)$ satisfies \eqref{3.6}, i.e., $(\bds{U}_{\!\!\ep}, \bds{Z}_\ep)$ is a positive solution of \eqref{3.6}. Thus, $(\bds{U}_{\!\!\ep}, \bds{Z}_\ep)=(\bds{H}^+_{\!i,\ep}, \bds{V}^+_{\!i,\ep})$ by the uniqueness of positive solution of \eqref{3.6}. Consequently,
\bess
&\dd\lim_{n\to\infty}U_\ep(x,t+nT)=\bds{H}^+_{\!i,\ep}(x,t),\;\;\;
&\dd\lim_{n\to\infty}Z_\ep(x,t+nT)=\bds{V}^+_{\!i,\ep}(x,t) \;\;\; {\rm in}\;\; C^{2,1}(\overline\Omega\times[0,T]).
\eess
Note that $\dd\lim_{\ep\to0}({\bds{H}^+_{\!i,\ep}}, {\bds{V}^+_{\!i,\ep}})=({\bds{H}_{i}}, {\bds{V}_{\!i}})$ in $[C^{2,1}(\overline\Omega\times[0,T])]^2$, it follows from \eqref{4.5} that
\bes
\dd\limsup_{n\to\infty}(H_i(x,t+nT), V_i(x,t+nT))\leq (\bds{H}_{\!i}(x,t), \bds{V}_{\!i}(x,t)) \;\;\;{\rm uniformly\; in}\; \;\overline\Omega\times[0,T].
\lbl{4.7}\ees

{\it Step 2}. Let $\bds{W}(x,t)=\bds{V}(x,t)-\bds{V}_{\!i}(x,t)$. Then $\bds{W}(x,t)>0$ for $(x,t)\in\Omega\times(0,T]$ (Theorem \ref{th3.1}). The direct calculation yields that $\bds{W}$ satisfies
 \bess\begin{cases}
 \partial_t\bds{W}-\nabla\cdot d_2(x,t)\nabla\bds{W}+\big(\mu_1(x,t)+\mu_2(x,t)\bds{V}(x,t)
 +\sigma_2(x,t)\bds{H}_{\!i}(x,t)\big)\bds{W}\\
\hspace{40mm}=\beta(x,t)\bds{V}(x,t),\;&x\in\Omega,\;0<t\le T,\\[2mm]
a_2\dd\frac{\partial\bds{W}}{\partial\nu}+b_2(x,t)\bds{W}=0,\;\;&x\in\partial\Omega,\;0<t\le T,\\[1mm]
 \bds{W}(x,0)=\bds{W}(x,T),\;\;&x\in\Omega.
 \end{cases}\eess
Since $\mu_1(x,t)+\mu_2(x,t)\bds{V}(x,t)+\sigma_2(x,t)\bds{H}_{\!i}(x,t)>0$ and $\beta(x,t)\bds{V}(x,t)\ge,\,\not\equiv 0$ for $(x,t)\in\Omega\times(0,T]$, it follows from the maximum principle (\cite[Theorem 7.1]{Wpara}) that $\bds{W}(x,t)>0$ for $(x,t)\in\overline\Omega\times[0,T]$ when $a_2=1$, and $\bds{W}(x,t)>0$ for $(x,t)\in\Omega\times(0,T]$ when $a_2=0$. Then we have the following conclusion:
\vspace{-2mm}
\begin{enumerate}[leftmargin=16mm]
\item[{\bf Claim}:]  there exists a $0<\tau_0<\ep_0$ such that, for all $0<\tau<\tau_0$,
  \bess
 \bds{W}(x,t)>2\tau\varphi(x,t), \;\;{\rm i.e.,}\;\; \bds{V}_{\!i}(x,t)<\bds{V}(x,t)-2\tau\varphi(x,t),\;\;(x,t)\in\Omega\times[0,T].
  \eess
\end{enumerate}
In fact, if $\bds{W}(x,t)>0$ in $\overline\Omega\times[0,T]$, this claim is obvious. If $\bds{W}(x,t)>0$ for $(x,t)\in\Omega\times(0,T]$, then $\partial\bds{W}(x,t)/\partial\nu<0$ on $\pt\oo\times[0,T]$ by the Hopf boundary lemma, and this claim can be proved by adapting analogous methods as in \cite[Lemma 2.1]{WPbook24}.

For such $\tau$, since $\dd\lim_{\ep\to0}\bds{Z}_\ep=\lim_{\ep\to0}\bds{V}^+_{\!i,\ep}=\bds{V}_{\!i}$ in $C^{2,1}(\overline\Omega\times[0,T])$, by the analogous discussions of \qq{4.3}, we can find $0<\bar\ep<\ep_0$ such that
 \bess
 \bds{Z}_{\bar\ep}(x,t)<\bds{V}_{\!i}(x,t)+\frac\tau2\vp(x,t),
 \;\;\;(x,t)\in\Omega\times[0,T].\eess
Recalling that $\dd\lim_{n\to\yy}Z_{\bar\ep}(x,t+nT)=\bds{Z}_{\bar\ep}(x,t)$ in $C^{2,1}(\overline\Omega\times[0,T])$ and using \qq{4.3}, we can find $\tilde N\gg 1$ such that
 \bes
 Z_{\bar\ep}(x,t+nT)<\bds{Z}_{\bar\ep}(x,t)+\frac\tau2\vp(x,t)<\bds{V}_{\!i}(x,t)+\tau \vp(x,t),\;\;\;(x,t)\in\Omega\times[0,T]
 \lbl{4.8}\ees
for all $n\geq \tilde N$. By \qq{4.5}, $V_i(x,t+(n+N)T)\leq Z_{\bar\ep}(x,t+nT)$ holds for $(x,t)\in\Omega\times(0,T]$. Thanks to the above claim and \qq{4.8}, it follows that
\bess
 V_i(x,t+(n+N)T)<\bds{V}_{\!i}(x,t)+\tau \vp(x,t)<\bds{V}(x,t)-\tau\varphi(x,t),\;\;\;(x,t)\in\Omega\times[0,T]
 \eess
for all $n\geq \tilde N$. Set $N_*=\tilde N+N+1$. Since $V_u=V-V_i$, we have that $(H_i, V_i)$ satisfies
 \bess\begin{cases}
\partial_t H_i-\nabla\cdot d_1(x,t)\nabla H_i=-\rho(x,t)H_i+\sigma_1(x,t) H_u(x,t)V_i,\;\; &x\in\oo,\;t>N_*T,\\
\partial_t V_i-\nabla\cdot d_2(x,t)\nabla V_i\geq\sigma_2(x,t)({\bds{V}}(x,t)-\tau\varphi(x,t)-V_i)^+H_i\\
 \hspace{42mm}-\big(\mu_1(x,t)+\mu_2(x,t)({\bds{V}}(x,t)
 +\tau\varphi(x,t))\big)V_i,\;\; &x\in\oo,\;t>N_*T,\\[2mm]
a_1\dd\frac{\partial H_i}{\partial\nu}+b_1(x,t)H_i=a_2\frac{\partial V_i}{\partial\nu}+b_2(x,t)V_i=0,\;\; &x\in\pt\oo,\;t>N_*T.
 \end{cases}\eess

Let $(\phi^{-\tau}_1,\phi^{-\tau}_2)$ be the positive eigenfunction corresponding to $\lm({\bds{V}}; -\tau)$. Similar to Step 1, we can find $0<\delta\ll 1$ such that $\delta(\phi^{-\tau}_1,\phi^{-\tau}_2)$ is a lower solution of \eqref{3.6} with $\ep=-\tau$, and
 \[\delta(\phi^{-\tau}_1(x,0),\phi^{-\tau}_2(x,0))\le (H_i(x,NT), V_i(x,NT)),\;\;\; x\in\Omega.\]
Let $(P_\tau, R_\tau)$ be the unique positive solution of
 \bess\begin{cases}
\partial_t P_\tau-\nabla\cdot d_1(x,t)\nabla P_\tau=-\rho(x,t)P_\tau+\sigma_1(x,t) H_u(x,t)R_\tau,
\;\; &x\in\oo,\;t>0,\\
\partial_t R_\tau-\nabla\cdot d_2(x,t)\nabla R_\tau=\sigma_2(x,t)({\bds{V}}(x,t)-\tau\varphi(x,t)-R_\tau)^+P_\tau\\
 \hspace{44mm}-\big(\mu_1(x,t)+\mu_2(x,t)({\bds{V}}(x,t)
 +\tau\varphi(x,t))\big)R_\tau,\;\; &x\in\oo,\;t>0,\\[2mm]
a_1\dd\frac{\partial P_\tau}{\partial\nu}+b_1(x,t)P_\tau=a_2\frac{\partial R_\tau}{\partial\nu}+b_2(x,t)R_\tau=0,\;\; &x\in\partial\oo,\;t>0,\\[1mm]
(P_\tau(x,0), R_\tau(x,0))=\delta(\phi^{-\tau}_1(x,0),\phi^{-\tau}_2(x,0)),\;\;&x\in\Omega.
\end{cases}\eess
By the comparison principle,
 \bes
(H_i(x,t+N_*T), V_i(x,t+N_*T))\geq (P_\tau(x,t), R_\tau(x,t))\geq (\delta \phi^{-\tau}_1(x,t),\delta \phi^{-\tau}_2(x,t))
\lbl{4.9}\ees
for $x\in\oo$ and $t>0$. Set $P_\tau^n(x,t)=P_\tau(x,t+nT)$ and $R_\tau^n(x,t)=R_\tau(x,t+nT)$ for $(x,t)\in\Omega\times[0,T]$. Then $(P_\tau^n, R_\tau^n)$ satisfies
 \bess\begin{cases}
\partial_t P_\tau^n-\nabla\cdot d_1(x,t)\nabla P_\tau^n=-\rho(x,t)P_\tau^n+\sigma_1(x,t) H_u(x,t)R_\tau^n,\;\;&x\in\Omega,\;0<t\le T,\\
\partial_t R_\tau^n-\nabla\cdot d_2(x,t)\nabla R_\tau^n=\sigma_2(x,t)({\bds{V}}(x,t)
-\tau\varphi(x,t)-R_\tau^n)^+P_\tau^n\\
 \hspace{44mm}-\big(\mu_1(x,t)+\mu_2(x,t)({\bds{V}}(x,t)+\tau\varphi(x,t))\big)R_\tau^n,\;\;&x\in\Omega,\;0<t\le T,\\[2mm]
a_1\dd\frac{\partial P_\tau^n}{\partial\nu}+b_1(x,t)P_\tau^n
=a_2\frac{\partial R_\tau^n}{\partial\nu}+b_2(x,t)R_\tau^n=0,
\;\;&x\in\partial\Omega,\;0<t\le T,\\[1mm]
P_\tau^n(x,0)=P_\tau^{n-1}(x,T), \;\;R_\tau^n(x,0)=R_\tau^{n-1}(x,T),&x\in\Omega.
\end{cases}\eess
Similar to Step 1, $P_\tau^n$ and $R_\tau^n$ are monotonically increasing in $n$, and
 \bess
\lim_{n\to\infty}P_\tau(x,t+nT)=\bds{H}^+_{\!i,-\tau}(x,t), \ \ \
\lim_{n\to\infty}R_\tau(x,t+nT)=\bds{V}^+_{\!i,-\tau}(x,t) \ \ {\rm in}\;\; C^{2,1}(\overline\Omega\times[0,T]),
 \eess
where $(\bds{H}^+_{\!i,-\tau}, \bds{V}^+_{\!i,-\tau})$ is the unique positive periodic solution of problem \eqref{3.6} with $\ep=-\tau$. Take advantage of  \eqref{4.9} and $\dd\lim_{\tau\to 0}(\bds{H}^+_{\!i,-\tau}, \bds{V}^+_{\!i,-\tau})=(\bds{H}_{i}, \bds{V}_{\!i})$, it follows that
\bess
\liminf_{n\to\infty}(H_i(x,t+nT), V_i(x,t+nT))\geq (\bds{H}_{\!i}(x,t), \bds{V}_{\!i}(x,t))\;\;\;{\rm uniformly\; in}\;\;\overline\Omega\times[0,T].
\eess
This, together with \eqref{4.7}, yields that
\bess
\lim_{n\to\infty}(H_i(x,t+nT),V_i(x,t+nT))=(\bds{H}_{\!i}(x,t),\bds{V}_{\!i}(x,t)) \;\;\;{\rm uniformly\; in}\;\;\overline\Omega\times[0,T].
\eess

Using the fact that $\dd\lim_{n\to\yy}(V_u(x,t+nT)+V_i(x,t+nT))={\bds{V}}(x,t)$ in $C^{2,1}(\overline\Omega\times[0,T])$ and the uniform estimate (cf. \cite[Theorems 2.11 and 3.14]{Wpara}), we see that \qq{4.1} holds.

(ii)\, Assume that $\zeta(\mu_1,\beta)<0$ and $\lm({\bds{V}})\ge0$. Let $\lm({\bds{V}}; \ep)$ be the principal eigenvalue of \eqref{3.7}. If $\lm({\bds{V}})>0$, then $\lm({\bds{V}}; \ep)>0$ when $0<\ep\ll 1$. From the proof of necessity of Theorem \ref{th3.1} we see that \eqref{3.6a} has no positive solution. If \eqref{3.6} has a positive solution $(\bds{H}_{\!i,\ep}^+, \bds{V}_{\!i, \ep}^+)$, then $(\bds{H}_{\!i,\ep}^+, \bds{V}_{\!i, \ep}^+)< (\ol{\bds{H}}_{\!i}, \bds{V}+\ep\varphi)$ by the comparison principle since  $(\ol{\bds{H}}_{\!i}, \bds{V}+\ep\varphi)$ is a strict upper solution of \eqref{3.6}, and so  $(\bds{H}_{\!i,\ep}^+, \bds{V}_{\!i, \ep}^+)$ is a positive solution of \eqref{3.6a}. This is impossible.

Let $(U,Z)$ be the unique positive solution of \qq{4.4} with initial data $(C, K)$, where $C$ and $K$ are suitably large positive constants, for example,
  \bess
 K&>&\max_{\overline\Omega\times[0,T]}\max\{{\bds{V}}(x,t)+\ep\varphi(x,t),\,V_i(x,NT)\},\\
 C&>&\max_{\overline\Omega\times[0,T]}\max\kk\{K\frac{\sigma_1(x,t)}{\rho(x,t)}H_u(x,t),\, H_i(x,NT)\rr\},\eess
such that $(C, K)$ is an upper solution of \qq{3.6}. Similar to the above,  \bess
 \dd\lim_{n\to\yy}(U(x,t+nT),Z(x,t+nT))=(\bds{H^+_{i,\ep}}(x,t), \bds{V^+_{i,\ep}}(x,t))=(0,0)\;\;\;{\rm in}\;\;[C^{2,1}(\overline\Omega\times[0,T])]^2
  \eess
since \eqref{3.6} has no positive solution. By the comparison principle we have $\dd\lim_{t\to\yy}(H_i, V_i)=(0,0)$. This, together with
$\dd\lim_{n\to\yy}(V_u(x,t+nT)+V_i(x,t+nT))={\bds{V}}(x,t)$,
yields the limit \qq{4.2}.

If $\lm({\bds{V}})=0$, then $\lm({\bds{V}}; \ep)<0$ and \eqref{3.6} has a unique positive solution $(\bds{H^+_{i,\ep}}, \bds{V^+_{i,\ep}})$ for any $\ep>0$. Obviously, $\dd\lim_{\ep\to 0}(\bds{H^+_{i,\ep}}, \bds{V^+_{i,\ep}})=(0, 0)$ in $[C^{2,1}(\overline\Omega\times[0,T])]^2$. Similar to the above, $\dd\lim_{t\to\yy}(H_i, V_i)=(0,0)$ and \qq{4.2} holds.

(iii)\; If $\zeta(\mu_1,\beta)\ge 0$, then \eqref{3.3} has no positive solution. Therefore, $\dd\lim_{n\to\yy}V(x,t+nT)=0$, and then
   \bess
 \lim_{n\to\yy}V_u(x,t+nT)=\dd\lim_{n\to\yy}V_i(x,t+nT))=0\eess
uniformly in $\overline\Omega\times[0,T]$. So, $\dd\lim_{n\to\yy}H_i(x,t+nT)=0$  uniformly in $\overline\Omega\times[0,T]$ by the first equation of \qq{1.1}.
The proof is complete.
 \end{proof}

\vskip 4pt \noindent {\bf Funding} The first author was supported by National Natural Science Foundation of China (No. 12171120). The second author was supported by National Natural Science Foundation of China (No. 12201457, 12271401).\vskip 4pt

\vskip 4pt \noindent {\bf Declarations} The authors have no relevant financial or non-financial interests to disclose.\vskip 4pt


\begin{thebibliography}{10}\setlength{\itemsep}
{0mm}\linespread{1.2}\selectfont

\bibitem{Am76} H. Amann, Fixed point equations and nonlinear eigenvalue problems in ordered Banach spaces. SIAM Rev., 18, 620-709 (1976).

\vspace{-0.5mm}\bibitem{DKH} G.W. Dick, S.F. Kitchen and A.J. Haddow, Zika virus. I. Isolations and serological specificity. Trans. Roy. Soc.
Trop. Med. Hyg., 46, 509-520 (1952).

\vspace{-0.5mm}\bibitem{FMW17} W.E. Fitzgibbon, J.J. Morgan and G.F. Webb, An outbreak vector-host epidemic model with  spatial structure: the 2015-2016 Zika outbreak in Rio De Janeiro. Theor. Biol. Med. Modell., 14, 2-17 (2017).

\vspace{-0.5mm}\bibitem{Hess}
P. Hess, Periodic-parabolic Boundary Value Problems and Positivity. Longman Scientific and Technical, Harlow, 1991.

\vspace{-0.5mm}\bibitem{LZ21}
F.X. Li and X.-Q. Zhao, Global dynamics of a reaction-diffusion model of Zika virus transmission with seasonality. Bull. Math. Biol., 83(5), 43, 25 pp (2021).

\vspace{-0.5mm}\bibitem{MWW18}
P. Magal, G.F. Webb and Y.X. Wu, On a vector-host epidemic model with spatial structure. Nonlinearity, 31,  5589-5614 (2018).

\vspace{-0.5mm}\bibitem{Sw92} G. Sweers, Strong positivity in $C(\overline\Omega)$ for elliptic systems. Math. Z., 209, 251-271 (1992).

\vspace{-0.5mm}\bibitem{Wpara} \newblock M.X. Wang,
\newblock Nonlinear Second Order Parabolic Equations.
\newblock Boca Raton: CRC Press, 2021.

\vspace{-0.5mm}\bibitem{Wang24}M.X. Wang, Note on a vector-host epidemic model with spatial structure. 	arXiv:2406.11407. https://doi.org/10.48550/arXiv.2406.11407.

\vspace{-0.5mm}\bibitem{WPbook24} M.X. Wang and P.Y.H. Pang, Nonlinear Second Order Elliptic Equations. Heidelberg: Springer, 2024.

\end{thebibliography}
\end{document}